\newtheorem{theorem}{Theorem}[section]
\newtheorem{lemma}[theorem]{Lemma}
\newtheorem{proposition}[theorem]{Proposition}
\newtheorem{corollary}[theorem]{Corollary}
\theoremstyle{definition}
\newtheorem{definition}[theorem]{Definition}
\newtheorem{example}[theorem]{Example}
\newtheorem{notation}[theorem]{Notation}
\theoremstyle{remark}
\newtheorem{remark}[theorem]{Remark}
\numberwithin{equation}{section}
\newcommand{\Spec}{\mathrm{Spec}}
\newcommand{\gr}{\mathrm{gr}}
\newcommand{\gb}{\mathrm{glb}}
\newcommand{\Ind}{\mathrm{Ind}}
\newcommand{\RT}{\mathcal{R}}
\newcommand{\Z}{\mathbb{Z}}
\newcommand{\N}{\mathbb{N}}
\begin{document}

\title{Chains of Semiprime and Prime Ideals  in Leavitt Path Algebras }
\author{G.\ Abrams} 
\address{Department of Mathematics, University of Colorado, Colorado Springs, CO, 80918, USA}
\email{abrams@math.uccs.edu}
\thanks{The first author is partially supported by a Simons Foundation  Collaboration Grants for Mathematicians Award \#208941.}
\author{B.\ Greenfeld}
\address{Department of Mathematics, Bar Ilan University, Ramat Gan 5290002, Israel}
\email{beeri.greenfeld@gmail.com}
\author{Z.\ Mesyan} 
\address{Department of Mathematics, University of Colorado, Colorado Springs, CO, 80918, USA}
\email{zmesyan@uccs.edu}
\author{K.\ M.\ Rangaswamy}
\address{Department of Mathematics, University of Colorado, Colorado Springs, CO, 80918, USA}
\email{krangasw@uccs.edu}

\subjclass[2010]{16W10, 16D25}

\keywords{Leavitt path algebra, semiprime ideal, prime ideal, Kaplansky conjecture}

\begin{abstract}
Semiprime ideals of an arbitrary Leavitt path algebra $L$ are described in terms of their generators. This description is then used to show that the semiprime ideals form a complete sublattice of the lattice of ideals of $L$, and they enjoy a certain gap property identified by Kaplansky in prime spectra of commutative rings. It is also shown that the totally ordered sets that can be realized as prime spectra of Leavitt path algebras are precisely those that have greatest lower bounds on subchains and enjoy the aforementioned gap property. Finally, it is shown that a conjecture of Kaplansky regarding identifying von Neumann regular rings via their prime factors holds for Leavitt path algebras.
\end{abstract}

\maketitle

\section{Introduction}

Kaplansky included the following results in two of his monographs from the 1970s, the first of which he attributed to R.\ Hamsher. (See~\cite[page 1]{KapOperator} and~\cite[Theorem 11]{K-1}, respectively.)

\medskip

Theorem (K-1): A commutative ring $R$ is von Neumann regular if and only if $R$ is semiprime and every prime factor ring of $R$  is von Neumann regular.

\medskip

Theorem (K-2): If $P \subsetneq Q$ are prime ideals of a commutative unital ring $R$, then there are prime ideals $P',Q'$ of $R$ such that $P\subseteq P'\subsetneq Q'\subseteq Q$ and $Q'$ covers $P'$, that is, there are no prime ideals of $R$  strictly between $P'$ and $Q'$.

\medskip

Kaplansky took an interest in extending both theorems to noncommutative rings. In each case, his questions inspired substantial new results in the literature. Our goal in this note is to investigate whether Theorems (K-1) and (K-2) hold for Leavitt path algebras. 

In~\cite{KapOperator} Kaplansky conjectured that Theorem (K-1) extends to  all noncommutative rings. While this turned out to not be the case, J.\ Fisher and R.\ Snider~\cite{FS} proved that Kaplansky's conjecture holds precisely when $R$ satisfies the additional condition that the union of any ascending chain of semiprime ideals of $R$ is again a semiprime ideal. (Recall that an ideal $I$ of a ring $R$ is said to be \emph{semiprime} if $I$ is the intersection of a collection of prime ideals of $R$; equivalently, whenever $A^k\subseteq I$  implies $A\subseteq I$, for any ideal $A$ of $R$ and positive integer $k$.) In view of the connection between semiprime ideals and Kaplansky's conjecture indicated in the result of Fisher and Snider, we investigate semiprime ideals in the context of Leavitt path algebras (Section \ref{Semiprimesection}).  We first characterize the semiprime ideals of a Leavitt path algebra $L:=L_K(E)$ for an arbitrary graph $E$ and field $K$. Specifically, we characterize such ideals $I$ in terms of their generators, namely, the vertices inside $I$ and polynomials over $K$, with square-free factors, evaluated at cycles without exits. The semiprime ideals of Leavitt path algebras turn out to be much more well-behaved than the prime ideals. For instance, the semiprime ideals of $L$ form a complete sublattice of the ideal lattice of $L$. As a consequence, we show that the above conjecture of Kaplansky regarding Theorem (K-1) holds for Leavitt path algebras. We also show that every ideal of $L$ is semiprime if and only if the graph $E$ satisfies Condition (K), and that every semiprime ideal of $L$ is prime if and only if the prime ideals of $L$ form a chain under set inclusion.

In \cite{K-1} Kaplansky  asked whether the property of the prime ideals of $R$ in Theorem (K-2), together with the existence of least upper bounds and greatest lower bounds for chains in $\Spec(R)$, the prime spectrum of $R$, characterizes $\Spec(R)$ as a partially ordered set. While this also turned out not to be the case, it opened the door for serious investigations of various aspects of $\Spec(R)$ as a partially ordered set (see, e.g.,~\cite{H, Lewis, LO, S, Speed}). Recently, it was shown in~\cite{AAMS} that a wide class of partially ordered sets can be realized as prime spectra of Leavitt path algebras. In particular, every partially ordered set $(P, \leq)$ that satisfies the property in Theorem (K-2), has greatest lower bounds for chains, and enjoys an additional property (for every downward directed subset $S$ of $P$ and every $p \in P\backslash S$ such that $p$ is greater than the greatest lower bound of $S$, there exists $s \in S$ such that $p > s$) can be realized as the prime spectrum of a Leavitt path algebra. However, for noncommutative rings, it is still an open question whether Theorem (K-2) holds. In Section 4, we continue to investigate Theorem (K-2) in the context of Leavitt path algebras. We say that a ring $R$ satisfies the \emph{Kaplansky Property} (KAP, for short) for a class $\mathcal{F}$ of ideals in $R$, if every pair $P,Q \in \mathcal{F}$ satisfies the condition in Theorem (K-2).  The properties of semiprime ideals mentioned above enable us to show that the Kaplansky Property always holds for semiprime ideals in a Leavitt path algebra. We also show that a totally ordered set can be realized as the prime spectrum of a Leavitt path algebra if and only if the set has greatest lower bounds for all subchains and satisfies the Kaplansky Property.

In the final section (Section \ref{UnionPrimesection}) we consider ideals which are not prime but which are unions of ascending chains of prime ideals. Following~\cite{GRV}, such ideals are called \emph{union-prime ideals}. They are of interest to us since their presence hinders the Kaplansky Property from holding in the prime spectrum of a ring. We first show that the union-prime ideals in a Leavitt path algebra are all graded, and hence always semiprime. To measure the extent to which the union-prime ideals are prevalent in a Leavitt path algebra $L_{K}(E)$, we consider the union-prime index $P^{\Uparrow}$ introduced in~\cite{GRV}, and show that $P^{\Uparrow}(L_{K}(E))\leq|E^{0}|$, where $E^0$ is the set of vertices of the graph $E$. Finally, an example is constructed to show that for every infinite cardinal $\kappa$ there is a directed graph $E_{\kappa}$ such that $P^{\Uparrow}(L_{K}(E_{\kappa}))=\kappa$.

{\bf Acknowledgement.}  We thank the referee for a very thorough reading of the original version.   

\section{Preliminaries}

We give below a short outline of some of the needed basic concepts and results about ordered sets and Leavitt path algebras. For general terminology and facts about Leavitt path algebras, we refer the reader to~\cite{AAS-1, R-1}.

\subsection{Partially ordered sets}

A  \emph{preordered set} $(P, \leq)$ is a set $P$ together with a binary relation $\leq$ which is reflexive and transitive. If in addition $\leq$ is antisymmetric, then $(P,\leq)$ is called a \emph{partially ordered set} (often shortened to \emph{poset}). If a poset $(P, \leq)$ has the property that $p\leq q$ or $q \leq p$ for all $p,q \in P$, then $(P, \leq)$ is called a  \emph{totally ordered set}, or a \emph{chain}.  

Let $(P, \leq)$ be a partially ordered set. An element $x$ of $P$ is called a \emph{least} element of $P$ if $x \leq y$ for all $y \in P$, and $x$ is a \emph{greatest} element of $P$ if $y \leq x$ for all $y \in P$. $P$ is \emph{downward directed} if it is nonempty, and for all $p,q \in P$ there exists $r \in P$ such that $p \geq r$ and $q\geq r$.

For any poset $(P, \leq)$ and subset $S$ of $P$, by restriction $(S, \leq)$ is a poset. A \emph{lower bound} for $S \subseteq P$ is an element $x$ of $P$ such that $x\leq s$ for all $s\in S$. A \emph{greatest lower bound} of $S$ is a lower bound $x$ of $S$ with the additional property that $y\leq x$ for every $y\in P$ having $y\leq s$ for all $s\in S$. \emph{(Least) upper bounds} are defined analogously.
  
The set of the integers will be denoted by $\Z$, the set of the natural numbers (including $0$) will be denoted by $\N$, and the cardinality of a set $X$ will be denoted by $|X|$.  

\subsection{Directed Graphs}

A \emph{(directed) graph} $E=(E^{0},E^{1},r,s)$ consists of two sets $E^{0}$ and
$E^{1}$ together with maps $r,s:E^{1}\rightarrow E^{0}$. The elements of
$E^{0}$ are called \emph{vertices} and the elements of $E^{1}$ \emph{edges}. All the graphs $E$ that we consider (excepting when specifically stated) are arbitrary in the sense that no restriction is placed either on the number of vertices in $E$ or on the number of edges emitted by a single vertex.

A vertex $v \in E^0$ is called a \emph{sink} if it emits no edges, and $v$ is called a \emph{regular vertex} if it emits a nonempty finite set of edges. An \emph{infinite emitter} is a vertex which emits infinitely many edges. A graph without infinite emitters is said to be \emph{row-finite}. For each $e\in E^{1}$, we call $e^*$ a \emph{ghost edge}. We let $r(e^*)$ denote $s(e)$, and we let $s(e^*)$ denote $r(e)$. A \emph{path} $\mu$ of length $n>0$, denoted $|\mu | = n$,  is a finite sequence of edges $\mu=e_{1}e_{2}\cdots e_{n}$ with $r(e_{i})=s(e_{i+1})$ for all $i=1,\dots, n-1$. In this case $\mu^*=e_{n}^*\cdots e_{2}^*e_{1}^*$ is the corresponding \emph{ghost path}. A vertex is considered to be a path of length $0$. The set of all vertices on a path $\mu$ is denoted by $\mu^{0}$.

A path $\mu =e_1\cdots e_{n}$ in $E$ is \emph{closed} if $r(e_{n})=s(e_{1})$, in which case $\mu$ is said to be \emph{based} at the vertex $s(e_{1})$. A closed path $\mu$ as above is called \emph{simple} provided it does not pass through its base more than once, i.e., $s(e_{i})\neq s(e_{1})$ for all $i=2,\dots,n$. The closed path $\mu$ is called a \emph{cycle} if it does not pass through any of its vertices twice, that is, if $s(e_{i})\neq s(e_{j})$ for every $i\neq j$. A cycle consisting of just one edge is called a \emph{loop}. An \emph{exit} for a path $\mu =e_{1} \cdots e_{n}$ is an edge $f$ that satisfies $s(f)=s(e_{i})$ for some $i$, but where $f\neq e_{i}$. The graph $E$ is said to satisfy \emph{Condition (L)} if every cycle in $E$ has an exit in $E$. The graph $E$ is said to satisfy \emph{Condition (K)} if any vertex on a simple closed path $\mu$ is also the base for a simple closed path $\gamma$ different from $\mu$. If $E$ has no cycles, then it is called \emph{acyclic}.

If there is a path in $E$ from a vertex $u$ to a vertex $v$, then we write $u\geq v$. It is easy to see that $(E^0, \leq)$ is a preordered set. A nonempty subset $D$ of $E^0$ is said to be \emph{downward directed} if for any $u,v\in D$, there exists a $w\in D$ such that $u\geq w$ and $v\geq w$. A subset $H$ of $E^{0}$ is called \emph{hereditary} if, whenever $v\in H$ and $w\in E^{0}$ satisfy $v\geq w$, then $w\in H$. A  set $H \subseteq E^0$ is \emph{saturated} if $r(s^{-1}(v))\subseteq H$ implies that $v\in H$, for any regular vertex $v$.

\subsection{Leavitt Path Algebras}\label{LPA-subsection}

Given a (nonempty) graph $E$ and a field $K$, the \emph{Leavitt path algebra} $L_{K}(E)$ is defined to be the $K$-algebra generated by $\{v:v\in
E^{0}\} \cup \{e,e^*:e\in E^{1}\}$, subject to the following relations:

\smallskip

{(V)} \ \ \ \ \ $vv = v$ and $vw = 0$ if $v \neq w$, for all $v,w\in E^0$;

{(E1)} \ \ \ \ $s(e)e=e=er(e)$ for all $e\in E^{1}$;

{(E2)} \ \ \ \ $r(e)e^*=e^*=e^*s(e)$ for all $e\in E^{1}$;

{(CK-1)} \ $e^*e=r(e)$ and $e^*f=0$ if $e\neq f$, for all $e,f\in E^{1}$;

{(CK-2)} \ $v=\sum_{e\in s^{-1}(v)}ee^*$ for every regular $v\in E^{0}$.

\smallskip

We note that every element of $L_K(E)$ can be expressed in the form $\sum_{i=1}^n k_i\alpha_i\beta_i^*$ for some $k_i \in K$ and paths $\alpha_i,\beta_i$ in $E$, and that sums of distinct vertices in $E^0$ form a set of local units for $L_K(E)$. Moreover, every Leavitt path algebra $L_{K}(E)$ is $\Z$-graded. Specifically, $L_{K}(E)=\bigoplus_{n\in\Z}L_{n}$, where 
\[L_{n}=\bigg\{\sum_i k_{i}\alpha_{i}\beta_{i}^*\in L:\text{ }|\alpha_{i}|-|\beta_{i}|=n\bigg\}.\] 
This grading is induced by defining, for all $v\in E^{0}$ and $e\in E^{1}$, $\deg (v)=0$, $\deg(e)=1$, $\deg(e^*)=-1$. Here the $L_{n}$ are abelian subgroups satisfying $L_{m}L_{n}\subseteq L_{m+n}$ for all $m,n\in \mathbb{Z}$.  An ideal $I$ of $L_{K}(E)$ is said to be a \emph{graded ideal} if $I = \bigoplus_{n\in\mathbb{Z}} (I\cap L_{n})$. Equivalently, if $a\in I$ and $a=a_{i_{1}}+\cdots+a_{i_{m}}$ is a graded sum with $a_{i_{k}}\in L_{i_{k}}$ for all $k=1,\dots,m$, then $a_{i_{k}}\in I$ for all $k$.

We shall be using some of the results from~\cite{AA, AAS, T}, where the graphs that the authors consider are assumed to be countable. We point out, however, that the results in these papers hold for arbitrary graphs, with no restriction on the number of vertices or the number of edges emitted by any vertex. In fact, these results without any restriction on the size of the graph are proved in~\cite{AAS-1}. For the convenience of the reader, we shall still give references to~\cite{AA, AAS, T} in addition to~\cite{AAS-1}. 

Given a graph $E$, a \emph{breaking vertex} of a hereditary saturated subset $H \subseteq E^0$ is an infinite emitter $w\in E^{0}\backslash H$ with the property that $0<|s^{-1}(w)\cap r^{-1}(E^{0}\backslash H)|<\infty$. The set of all breaking vertices of $H$ is denoted by $B_{H}$. For any $v\in B_{H}$, $v^{H}$ denotes the element $v-\sum_{s(e)=v, \, r(e) \notin H}ee^*$. Given a hereditary saturated subset $H$ and $S\subseteq B_{H}$, $(H,S)$ is called an \emph{admissible pair}, and the ideal of $L_K(E)$ generated by $H\cup \{v^{H}:v\in S\}$ is denoted by $I(H,S)$. It was shown in~\cite[Theorem 5.7]{T} (see also~\cite[Theorem 2.5.8]{AAS-1}) that the graded ideals of $L_{K}(E)$ are precisely the ideals of the form $I(H,S)$. Moreover, given two admissible pairs $(H_1,S_1)$ and $(H_2,S_2)$, setting $(H_1,S_1) \leq (H_2,S_2)$ whenever $H_1 \subseteq H_2$ and $S_1 \subseteq H_2 \cup S_2$, defines a partial order on the set of all admissible pairs of $L_K(E)$. The map $(H,S) \mapsto I(H,S)$ gives a one-to-one order-preserving correspondence between the poset of admissible pairs and the set of all graded ideals of $L_K(E)$, ordered by inclusion. In~\cite[Theorem 5.7]{T} (see also~\cite[Theorem 2.4.15]{AAS-1}) it was also shown that $L_{K}(E)/I(H,S)\cong L_{K} (E\backslash(H,S))$ for any admissible pair $(H,S)$. Here $E\backslash(H,S)$ is a \emph{quotient graph of} $E$ where \[(E\backslash(H,S))^{0}=(E^{0}\backslash H)\cup\{v':v\in B_{H} \backslash S\}\]
and
\[(E\backslash(H,S))^{1}=\{e\in E^{1}:r(e)\notin H\}\cup\{e':e\in E^{1} \text{ with } r(e)\in B_{H}\backslash S\},\]
and $r,s$ are extended to $(E\backslash(H,S))^{0}$ by setting $s(e^{\prime
})=s(e)$ and $r(e')=r(e)'$.

We conclude this section by recalling the following three results, since we shall refer to them frequently in our investigation.

\begin{proposition}[well-known; see, e.g., Proposition 3 in \cite{AAMS}] \label{IntersectionProperty}
Let $R$ be any ring, and let $\, \{P_\lambda :  \lambda \in \Lambda\}$ be a downward directed set (under set inclusion) of prime ideals of  $R$. Then $I = \bigcap_{\lambda \in \Lambda} P_\lambda$ is a prime ideal of $R$.  
\end{proposition}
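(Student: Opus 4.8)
The plan is to verify directly that $I = \bigcap_{\lambda \in \Lambda} P_\lambda$ satisfies the defining property of a prime ideal, namely that whenever $AB \subseteq I$ for ideals $A, B$ of $R$, one has $A \subseteq I$ or $B \subseteq I$. Since $I$ is an intersection of ideals it is automatically an ideal, and it is proper because $I \subseteq P_\lambda \subsetneq R$ for any $\lambda \in \Lambda$ (the index set being nonempty by the definition of downward directedness). So the only substantive point is primeness. I would use the ideal-theoretic formulation of primeness rather than an element-wise one, so that the argument is valid for an arbitrary, possibly non-unital, ring $R$ (as is relevant for Leavitt path algebras, which generally lack a unit).

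I would argue by contradiction: suppose $AB \subseteq I$ but $A \not\subseteq I$ and $B \not\subseteq I$. The failure of $A \subseteq I$ and $B \subseteq I$ means there exist indices $\lambda_1, \lambda_2 \in \Lambda$ with $A \not\subseteq P_{\lambda_1}$ and $B \not\subseteq P_{\lambda_2}$. The key step, and the only place where the hypothesis is used, is to invoke downward directedness: there is some $\mu \in \Lambda$ with $P_\mu \subseteq P_{\lambda_1}$ and $P_\mu \subseteq P_{\lambda_2}$. Because $P_\mu \subseteq P_{\lambda_1}$, the containment $A \subseteq P_\mu$ would force $A \subseteq P_{\lambda_1}$, contradicting the choice of $\lambda_1$; hence $A \not\subseteq P_\mu$, and symmetrically $B \not\subseteq P_\mu$. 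But $AB \subseteq I \subseteq P_\mu$, and $P_\mu$ is prime, so one of $A \subseteq P_\mu$ or $B \subseteq P_\mu$ must hold, a contradiction. This yields $A \subseteq I$ or $B \subseteq I$, as required.

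I expect no serious obstacle here; the proof is a direct application of the definitions, and its entire content lies in the observation that directedness lets one replace the two witnessing primes $P_{\lambda_1}, P_{\lambda_2}$ by a single prime $P_\mu$ lying below both, at which point ordinary primeness of $P_\mu$ closes the argument. The only points requiring a little care are the choice of the ideal-theoretic definition of primeness (for robustness in the non-unital setting) and the verification that $I$ is proper, both of which are immediate. It is also worth noting that directedness is genuinely essential: without it, an intersection of primes (for instance two incomparable ones) need not be prime, so the hypothesis cannot be dropped.
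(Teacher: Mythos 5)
Your proof is correct. Note that the paper itself gives no proof of this proposition, citing it as well-known (Proposition 3 in \cite{AAMS}); your argument is precisely the standard one: downward directedness lets you replace the two witnessing primes $P_{\lambda_1}, P_{\lambda_2}$ by a single $P_\mu$ below both, and primeness of $P_\mu$ finishes the job. Your choices of the ideal-theoretic formulation of primeness (appropriate in the non-unital setting) and the observation that $\Lambda$ is nonempty by the definition of downward directedness are both exactly the right points of care.
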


\begin{theorem}[Theorem 4 in \cite{R-2}] \label{non-graded-threorem}
Let $E$ be a graph, $K$ a field, and $L:=L_{K}(E)$. Also, let $I$ be a non-graded ideal of $L$, and set $H=I\cap E^{0}$ and $S=\{u\in B_{H}:u^{H}\in I\}$. Then $I=I(H,S)+\sum_{i\in X} \langle f_i(c_i) \rangle$ where $X$ is an index set, each $c_i$ is a cycle without exits in $E\backslash(H,S)$, and each $f_i(x)\in K[x]$ is a polynomial with a nonzero constant term.

Here $I(H,S)$ is called the \emph{graded part} of the ideal $I$, and is denoted by $\gr(I)$.
\end{theorem}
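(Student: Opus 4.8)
The plan is to peel off the largest graded ideal contained in $I$, pass to the corresponding quotient graph, and there reduce the problem to understanding ideals whose graded part is zero; for those, the only obstruction to being graded comes from cycles without exits, whose corners behave like $K[x,x^{-1}]$. First I would record that $A := I(H,S)$ is a graded ideal contained in $I$ and is in fact the \emph{largest} such: $H = I \cap E^0$ is hereditary and saturated, each $u^H$ with $u \in S$ lies in $I$ by the definition of $S$, so $A \subseteq I$; and any graded ideal $I(H',S') \subseteq I$ has $H' = I(H',S') \cap E^0 \subseteq H$ and $S' \subseteq H\cup S$, so its admissible pair sits below $(H,S)$. Using the order-preserving correspondence between admissible pairs and graded ideals recalled above, together with $L/A \cong L_K(E\backslash(H,S))$, I would pass to $\bar L := L/A \cong L_K(\bar E)$ with $\bar E = E\backslash(H,S)$, and set $\bar I := I/A$. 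The key reduction is that $\gr(\bar I) = 0$: the preimage in $L$ of the graded ideal $\gr(\bar I)$ under the graded quotient map $L \to \bar L$ is again graded, is contained in $I$, and contains $A$, so maximality of $A$ forces $\gr(\bar I) = 0$. In particular $\bar I$ contains no vertex of $\bar E$ and no element $\bar w^{\bar H}$.

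Next I would analyze $\bar I$ locally at the cycles without exits of $\bar E$. For a cycle without exits $c$ based at a vertex $v$, the corner $v\bar L v$ is isomorphic to the Laurent polynomial ring $K[x,x^{-1}]$ via $c \mapsto x$, $c^* \mapsto x^{-1}$. Hence $\bar I \cap v\bar L v$ is an ideal of $K[x,x^{-1}]$, and since every ideal there is principal and $x$ is a unit, it is generated by the image of a single polynomial $f(x) \in K[x]$ with nonzero constant term (normalize any Laurent generator by clearing powers of $x$); that is, $\bar I \cap v\bar L v = \langle f(c)\rangle_{v\bar L v}$, the intersection being $0$ for some cycles, which we discard. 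Let $\{c_i\}_{i \in X}$ be the cycles without exits with $\bar I \cap v_i \bar L v_i \ne 0$, with associated normalized generators $f_i$. Since each $f_i(c_i) \in \bar I$, the ideal $N := \sum_{i \in X}\langle f_i(c_i)\rangle$ of $\bar L$ satisfies $N \subseteq \bar I$; pulling back to $L$ gives the inclusion $\supseteq$ in the statement. If $X = \emptyset$ then $\bar I = 0$ by the Reduction Theorem below, contradicting that $I$ is non-graded, so $X \ne \emptyset$.

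The heart of the argument is the reverse inclusion $\bar I \subseteq N$, and this is where I expect the real work to lie. The main tool is the Reduction Theorem (see, e.g.,~\cite{AAS-1}): for every nonzero $a \in \bar L$ there are paths $\mu,\nu$ with $0 \ne \mu^* a \nu$ equal either to a nonzero scalar multiple of a vertex or to $g(c)$ for some cycle without exits $c$ and nonzero $g \in K[x,x^{-1}]$. I would argue by contradiction, choosing $0 \ne \bar x \in \bar I \setminus N$ of least complexity in a normal-form expression. Applying the Reduction Theorem to $\bar x$, the vertex alternative is impossible since $\gr(\bar I) = 0$, so $\mu^* \bar x \nu = g(c)$ lies in $\bar I \cap v\bar L v = \langle f_i(c_i)\rangle \subseteq N$ for the relevant $i$. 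The delicate step is to transfer $\mu^*\bar x\nu \in N$ back to a statement about $\bar x$ itself: multiplying by the idempotents $\mu\mu^*$ and $\nu\nu^*$ yields $\mu\mu^*\,\bar x\,\nu\nu^* \in N$, and I would aim to show that $\bar x - \mu\mu^*\bar x\nu\nu^*$ lies in $\bar I$ but has strictly smaller complexity, so that minimality gives $\bar x - \mu\mu^*\bar x\nu\nu^* \in N$ and hence $\bar x \in N$, the desired contradiction.

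Carrying out this peeling cleanly is the principal obstacle. Tracking how the monomials of $\bar x$ interact with the paths $\mu,\nu$ produced by the Reduction Theorem, and guaranteeing that the complexity genuinely drops, is exactly the bookkeeping that forces one to choose the reduction witnesses $\mu,\nu$ carefully and to induct on a well-chosen complexity measure rather than on a raw monomial count. Once $\bar I = N$ is established, lifting back through $L \to \bar L$ gives $I = A + \sum_{i\in X}\langle f_i(c_i)\rangle = I(H,S) + \sum_{i\in X}\langle f_i(c_i)\rangle$, with each $c_i$ a cycle without exits in $E\backslash(H,S)$ and each $f_i \in K[x]$ of nonzero constant term, as required.
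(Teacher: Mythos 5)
First, a point of reference: the paper does not prove this statement at all --- it is quoted verbatim as Theorem 4 of \cite{R-2} (see also \cite[Theorem 2.8.10]{AAS-1}) --- so there is no internal proof to compare against, and your attempt has to be judged against the published argument. Your opening reductions are correct and match that argument: $I(H,S)$ with $H=I\cap E^0$ and $S=\{u\in B_H: u^H\in I\}$ is indeed the largest graded ideal contained in $I$ (i.e.\ $\gr(I)$), passing to $\bar L = L/I(H,S)\cong L_K(E\backslash(H,S))$ gives an ideal $\bar I$ with $\gr(\bar I)=0$, the corner $v\bar L v$ at the base of a cycle without exits is $K[x,x^{-1}]$, and the easy inclusion $\sum_{i}\langle f_i(c_i)\rangle\subseteq \bar I$ follows. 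The problem is the reverse inclusion, which is the entire content of the theorem, and which you explicitly leave open.

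The descent you propose --- pick $\bar x\in\bar I\setminus N$ of ``least complexity,'' apply the Reduction Theorem to get $\mu^*\bar x\nu=g(c)\in N$, and replace $\bar x$ by $\bar x-\mu\mu^*\bar x\nu\nu^*$ --- does not work as stated, and no choice of complexity measure is exhibited for which it could. Left multiplication by $\mu\mu^*$ and right multiplication by $\nu\nu^*$ can strictly increase the number and length of monomials in a normal form (the paths $\mu,\nu$ produced by the Reduction Theorem are built from the monomials of $\bar x$ and are typically long), so there is no reason $\bar x-\mu\mu^*\bar x\nu\nu^*$ should be ``simpler'' than $\bar x$; moreover nothing prevents this difference from again lying outside $N$ with the same or greater complexity, so the contradiction never materializes. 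The published proof avoids term-by-term peeling altogether: one first proves the structural containment $\bar I\subseteq M$, where $M$ is the (graded) ideal of $\bar L$ generated by the vertices of all cycles without exits, essentially by showing that the image of $\bar I$ in $\bar L/M$ would otherwise be a nonzero ideal forced by the Reduction Theorem to contain a vertex, contradicting $\gr(\bar I)=0$; then the decomposition $M=\bigoplus_i M_i$ with $M_i\cong M_{\Lambda_i}(K[x,x^{-1}])$ (as in the proof of Theorem~\ref{characterization}) reduces everything to ideals of the PID $K[x,x^{-1}]$, where each component of $\bar I$ is visibly $\langle f_i(c_i)\rangle$. Without either that containment $\bar I\subseteq M$ or a genuinely decreasing complexity measure, your argument has a gap precisely at its heart.
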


In a graph $E$, a cycle $c = e_1e_2 \cdots e_n$ is said to be \emph{WK} (for ``without Condition (K)") if no vertex along $c$ is the source of another distinct cycle in $E$.   (In this context we view $c$ and any ``shift" $e_i \cdots e_n e_1 \cdots e_{i-1}$ of $c$ as being the same cycle.)  

\begin{theorem}[Theorem 3.12 in \cite{R-1}] \label{primeclass}
Let $E$ be a graph, $K$ a field, $I$ a proper ideal of $L_K(E)$, and $H = I \cap E^0$. Then $I$ is a prime ideal if and only if $I$ satisfies one of the following conditions.
\begin{enumerate}
\item[$(1)$] $I = \langle H \cup \{v^H : v \in B_H\} \rangle$ and $E^0\setminus H$ is downward directed. 
\item[$(2)$] $I = \langle H \cup \{v^H : v \in B_H\setminus \{u\}\}\rangle$ for some $u \in B_H$ and $E^0\setminus H = \{v \in E^0 : v \geq u\}$.
\item[$(3)$] $I = \langle H \cup \{v^H : v \in B_H\} \cup \{f(c)\}\rangle$ where $c$ is a WK cycle having source $u \in E^0$, $E^0\setminus H = \{v \in E^0 : v \geq u\}$, and $f(x)$ is an irreducible polynomial in $K[x, x^{-1}]$.
\end{enumerate}
\end{theorem}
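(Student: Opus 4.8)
The plan is to reduce the statement to a criterion for primeness after quotienting out the graded part of $I$, and then to the single clean fact that a Leavitt path algebra $L_K(F)$ is prime exactly when $F^{0}$ is downward directed. I would open with the one elementary computation that drives the necessity direction: for vertices $u,v$, the set $uLv$ is spanned by monomials $\alpha\beta^{*}$ with $s(\alpha)=u$, $r(\beta)=v$ and $r(\alpha)=r(\beta)$, so $uLv\neq 0$ if and only if $u$ and $v$ have a common descendant $w$ (a vertex with $u\geq w$ and $v\geq w$). If every such common descendant lies in $H=I\cap E^{0}$, then $uLv\subseteq I$ while $u,v\notin I$; hence primeness of $I$ forces $E^{0}\setminus H$ to be downward directed. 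The same argument applied inside any quotient shows, more generally, that if $J$ is a prime ideal then the set of vertices lying outside $J$ is downward directed.

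Next I would pass to the graded part. Let $(H,S)$ be the admissible pair attached to $I$, so that $\gr(I)=I(H,S)\subseteq I$ and, via the quotient isomorphism, $\bar L:=L/\gr(I)\cong L_{K}(F)$ with $F=E\setminus(H,S)$. Since $\gr(I)\subseteq I$, the ideal $I$ is prime in $L$ if and only if $\bar I:=I/\gr(I)$ is prime in $\bar L$, and by maximality of the graded part we have $\gr(\bar I)=0$, so $\bar I\cap F^{0}=\emptyset$. The elementary step above, applied inside $\bar L$, then forces $F^{0}$ itself to be downward directed. Now $F^{0}=(E^{0}\setminus H)\cup\{v':v\in B_{H}\setminus S\}$, and each adjoined $v'$ is a sink in $F$. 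Two distinct sinks have no common descendant, so downward directedness forces $\lvert B_{H}\setminus S\rvert\leq 1$; and if $B_{H}\setminus S=\{u\}$, then every vertex of $E^{0}\setminus H$ must lie above the sink $u'$, i.e. $E^{0}\setminus H=\{v:v\geq u\}$.

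Now I would split on whether $\bar I=0$. If $\bar I=0$ then $I$ is graded, and the previous paragraph yields exactly condition (1) (when $S=B_{H}$) and condition (2) (when $B_{H}\setminus S=\{u\}$); conversely, each configuration makes $F^{0}$ downward directed, hence $\bar L$ prime and $I$ prime. If $\bar I\neq 0$, then by Theorem~\ref{non-graded-threorem} applied in $\bar L$ we have $\bar I=\sum_{i}\langle f_{i}(c_{i})\rangle$ with each $c_{i}$ a cycle without exits in $F$. Distinct cycles without exits are vertex-disjoint and their vertices reach only themselves, so downward directedness of $F^{0}$ permits only one such cycle $c$, forces $S=B_{H}$, and forces $E^{0}\setminus H=\{v:v\geq u\}$ with $u=s(c)$. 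The crucial identification is that a cycle $c$ without exits generates a corner of $\bar L$ isomorphic to the Laurent polynomial ring $K[x,x^{-1}]$, under which $\langle f(c)\rangle$ corresponds to $(f(x))$; since $K[x,x^{-1}]$ is a principal ideal domain, the nonzero ideal $\bar I$ is prime precisely when $f$ is irreducible. Translating ``$c$ has no exit in $F$'' back through $E^{0}\setminus H=\{v\geq u\}$ shows that $c$ is a WK cycle in $E$, yielding condition (3), and reversing each implication gives the converse.

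I expect the main obstacle to be this final translation, in both directions: verifying that a WK cycle $c$ with $E^{0}\setminus H=\{v\geq s(c)\}$ loses all of its exits in the quotient graph $F$ (any exit of $c$ in $E$ must have range inside $H$, for otherwise its range would lie in $\{v\geq s(c)\}$ and produce a second cycle through a vertex of $c$, contradicting the WK hypothesis), and setting up the corner isomorphism with $K[x,x^{-1}]$ precisely enough that primeness of the Leavitt ideal $\langle f(c)\rangle$ coincides with irreducibility of $f$. By contrast, the graded cases become routine once the downward-directedness criterion is in hand, and the necessity of downward directedness itself follows directly from the $uLv$ computation of the first paragraph.
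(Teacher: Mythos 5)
The paper does not actually prove this statement: it is quoted verbatim as Theorem 3.12 of \cite{R-1}, so the only meaningful comparison is with the argument given there. Your outline reconstructs essentially that argument --- factor out $\gr(I)=I(H,S)$, use the quotient graph to reduce to an ideal of $L_K(F)$ meeting no vertices, derive downward directedness of $F^0$ from the $uLv$ computation, rule out two adjoined sinks $v'$ and two exit-free cycles, and handle the non-graded case through the identification of $\langle c^0\rangle$ with $M_{\Lambda}(K[x,x^{-1}])$ --- and the outline is sound. One typo: in the spanning description of $uLv$ the condition on $\beta$ should be $s(\beta)=v$ (with $r(\alpha)=r(\beta)=w$ the common descendant), not $r(\beta)=v$; your conclusion is the correct one regardless.

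Two places need more than you have written, and you should not present them as consequences of what precedes them. First, the converse directions of (1) and (2), and the primeness of $L_K(F)$ when $F^0$ is downward directed generally, do not follow from the $uLv$ computation, which only gives necessity; sufficiency requires the Reduction Theorem (every nonzero element of $L_K(F)$ can be compressed by left and right multiplication to a nonzero multiple of a vertex or to a polynomial in a cycle without exits), so either cite that fact explicitly or prove it. Second, ``since $K[x,x^{-1}]$ is a PID, the nonzero ideal $\bar I$ is prime precisely when $f$ is irreducible'' is a non sequitur as stated: irreducibility of $f$ gives primeness of $M_{\Lambda}(\langle f(x)\rangle)$ \emph{inside} $M=\langle c^0\rangle$, but primeness of $\langle f(c)\rangle$ as an ideal of $\bar L$ additionally requires that every nonzero ideal of $\bar L$ meet $M$ nontrivially (which holds here because every vertex of $F^0$ connects to $c$ and $c$ is the unique exit-free cycle, by the same structure theorem for ideals). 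You flag this as the main obstacle, correctly; it is the step that genuinely has to be carried out, together with the WK-versus-no-exits translation, which your parenthetical argument handles correctly once one extracts a vertex-simple path from $r(g)$ back to $c$.
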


\section{Semiprime Ideals}\label{Semiprimesection}

In this section, we give a complete description of the semiprime ideals in a Leavitt path algebra $L:=L_{K}(E)$, of an arbitrary graph $E$ over a field $K$, in terms their generators. We show that the semiprime ideals of $L$ form a complete sublattice of the lattice of two-sided ideals of $L$. In particular, the union of an ascending chain of semiprime ideals is always semiprime. We also describe when every ideal in $L$ is semiprime and when every semiprime ideal is prime.

\begin{remark} \label{ideals in K[x,x^-1]} 
We note that every nonzero ideal $I$ in the principal ideal domain $K[x,x^{-1}]$ is generated by a polynomial $f(x)\in K[x]$ with a nonzero constant term. For, suppose that $I=\langle g(x,x^{-1}) \rangle$, where $g(x,x^{-1})=\sum_{i=m}^{n} a_{i}x^{i}$ for some $m \leq n$ ($m,n\in \Z)$ and $a_i \in K$, with $a_{m}\neq 0 \neq a_{n}$. Then 
\[g(x,x^{-1})=x^{m}\sum_{i=0}^{n-m} b_{i}x^{i}=  x^m f(x),\] 
where $b_{i}=a_{i+m}$ and $b_{0}=a_{m}\neq0$. Since $x^{m}$ is a unit in $K[x,x^{-1}]$, we have $I=\langle f(x) \rangle$, where $f(x)$ is a polynomial in $K[x]$ with a nonzero constant term. 

Recall that an element $r$ of a commutative unital ring $R$ is called {\it irreducible} if $r$ is a nonunit, and $r$ cannot be written as $r = st$ with both $s,t$ nonunits in $R$.  It  follows from the previous observation (and the fact that the units in $K[x,x^{-1}]$ are of the form $kx^m$ for $ k \in K \setminus \{0\}$ and $m\in \Z$) that the irreducible polynomials in $K[x,x^{-1}]$ are precisely those of the form $x^m f(x)$ for some $m\in \Z$, where $f(x)$ is an irreducible polynomial in  $K[x]$.  
\end{remark}

We record a well-known fact  which  will be useful  in the proof Theorem~\ref{characterization}.

\begin{lemma} \label{semiprime in LaurentPolyRing} 
Let $K$ be a field and $A$ an  ideal of $K[x,x^{-1}]$, and write $A=\langle f(x)\rangle$ for some $f(x) \in K[x,x^{-1}]$. Then $A$ is semiprime if and only if $f(x)$ is a product of distinct irreducible polynomials.
\end{lemma}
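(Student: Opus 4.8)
The plan is to exploit the fact that $K[x,x^{-1}]$ is a principal ideal domain (being the localization of the PID $K[x]$ at the multiplicative set $\{x^n : n \geq 0\}$), so that unique factorization into irreducibles is available and the notion of semiprime coincides with that of a radical ideal. By the preceding Remark, every nonzero ideal of $K[x,x^{-1}]$ is of the form $\langle f\rangle$ for a polynomial $f$ with nonzero constant term, and I may write $f = u\, p_1^{e_1}\cdots p_r^{e_r}$, where $u$ is a unit, the $p_i$ are pairwise non-associate irreducibles, and each $e_i \geq 1$. The claim then reduces to: $\langle f\rangle$ is semiprime if and only if every $e_i = 1$ (the trivial cases where $f$ is a unit or zero being excluded, since such $f$ generates the whole ring or is disallowed by ``nonzero constant term'').

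First I would prove the ``if'' direction. Assuming each $e_i = 1$, set $g = p_1\cdots p_r$, so that $\langle f\rangle = \langle g\rangle$. Each $\langle p_i\rangle$ is a prime (indeed maximal) ideal, since $p_i$ is irreducible in a PID. The key step is to verify $\langle g\rangle = \bigcap_{i=1}^r \langle p_i\rangle$: the inclusion $\subseteq$ is immediate, while for $\supseteq$ one observes that an element lying in every $\langle p_i\rangle$ is divisible by each $p_i$, and since the $p_i$ are pairwise non-associate in the UFD $K[x,x^{-1}]$, it is divisible by their product $g$. Thus $\langle f\rangle$ is an intersection of prime ideals, hence semiprime by definition.

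For the ``only if'' direction I argue contrapositively: suppose some $e_j \geq 2$. Let $a = p_1\cdots p_r$ be the product of the distinct irreducible factors, and put $N = \max_k e_k$. Since each $p_i$ appears in $f$ to a power $e_i \leq N$, we have $f \mid a^N$, and therefore $\langle a\rangle^N = \langle a^N\rangle \subseteq \langle f\rangle$. On the other hand $f \nmid a$, because $p_j$ divides $f$ to multiplicity $e_j \geq 2$ but divides $a$ only to multiplicity $1$; hence $a \notin \langle f\rangle$. Taking the ideal $\langle a\rangle$ and exponent $N$ in the definition of semiprimeness, we obtain $\langle a\rangle^N \subseteq \langle f\rangle$ while $\langle a\rangle \not\subseteq \langle f\rangle$, so $\langle f\rangle$ is not semiprime.

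No step presents a genuine obstacle, as the argument is entirely standard commutative algebra in a PID; the one point requiring a little care is the equality $\langle g\rangle = \bigcap_i \langle p_i\rangle$ in the ``if'' direction, which rests on pairwise coprimality of the $p_i$ (equivalently, that their least common multiple equals their product). I would also note that an equivalent, slightly slicker formulation uses that in a commutative ring ``semiprime'' means ``radical,'' together with the fact that in the PID $K[x,x^{-1}]$ one has $\sqrt{\langle f\rangle} = \langle p_1\cdots p_r\rangle$; then $\langle f\rangle$ is semiprime exactly when $\langle f\rangle = \sqrt{\langle f\rangle}$, that is, exactly when $f$ is, up to a unit, a product of distinct irreducibles.
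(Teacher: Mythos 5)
Your argument is correct and follows essentially the same route as the paper: the ``if'' direction via $\langle p_1\cdots p_r\rangle=\bigcap_i\langle p_i\rangle$ with each $\langle p_i\rangle$ prime, and the ``only if'' direction by exhibiting an ideal (your $\langle a\rangle$ with exponent $N$, the paper's $\langle p(x)g(x)\rangle$ with exponent $k$) whose power lies in $A$ while the ideal itself does not. The only difference is cosmetic bookkeeping in the contrapositive step.
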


\begin{proof}
If $f(x)=p_{1}(x)\cdots p_{n}(x)$ is a product of distinct
irreducible polynomials, then it is easy to see that $\langle f(x)\rangle =\langle p_{1}(x) \rangle\cap\cdots\cap\langle p_{n}(x)\rangle$, and since each $\langle p_{i}(x)\rangle$ is a prime ideal, $A$ is semiprime.

Conversely, suppose that $A = \langle f(x) \rangle$, where 
$f(x)=p(x)^{k}g(x)$,  $p(x)$ is an irreducible polynomial, $k\geq 2$,  and
$\gcd(p(x),g(x))=1$.   Then $\langle p(x)g(x) \rangle ^k \subseteq A$, but $\langle p(x)g(x) \rangle \nsubseteq A$, so that $A$ is not semiprime.
\end{proof}

Next, observe that any graded ideal $I(H,S)$ of a Leavitt path algebra $L$ is semiprime, since $L/I(H,S)\cong L_{K}(E\backslash(H,S))$ is a Leavitt path algebra, and hence has zero prime radical. (It is well-known that the Jacobson radical, and hence also the prime radical, of any Leavitt path algebra is zero; see~\cite[Proposition 6.3]{AA} or~\cite[Proposition 2.3]{AAS-1}.) But not all semiprime ideals need to be graded ideals. For example, let $E$ be the graph having one vertex $v$ and one loop $e$, with $s(e)=v=r(e)$, pictured below. 

\[\xymatrix{{\bullet}^{v} \ar@(ur,dr)^e}\]
\medskip

\noindent Now $K[x,x^{-1}]\cong L_{K}(E)$, via the map induced by sending $1\mapsto v$, $x\mapsto e$,  and $x^{-1}\mapsto e^*$. Then the natural $\mathbb{Z}$-grading of $K[x,x^{-1}]$ induced by integral powers of $x$ coincides with the grading of $L_{K}(E)$ described above. Under this grading $\{0\}$ is the only proper graded ideal of $K[x,x^{-1}]$, since integral powers of $x$ are units in $K[x,x^{-1}]$, and hence every nonzero proper ideal of $K[x,x^{-1}]$, and particularly every nonzero semiprime ideal, is non-graded.

The next result gives a complete description of all the semiprime ideals of
$L_K(E)$ in terms of their generators.

\begin{theorem} \label{characterization}
Let $E$ be a graph, $K$ a field, and $L:=L_{K}(E)$. Also let $A$ be an ideal of $L$, with $A\cap E^{0}=H$ and $S=\{v\in B_{H}:v^{H}\in A\}$. Then the following are equivalent.
\begin{enumerate}
\item $A$ is a semiprime ideal.

\item $A=I(H,S)+\sum_{i\in X} \langle f_{i}(c_{i}) \rangle$ where $X$ is a possibly empty index set, each $c_{i}$ is a cycle without exits in $E\backslash(H,S)$, and each $f_{i}(x)$ is a polynomial with nonzero constant term, which is a product of distinct irreducible polynomials in $K[x,x^{-1}]$.
\end{enumerate}
\end{theorem}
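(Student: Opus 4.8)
The plan is to prove the equivalence by leveraging Theorem~\ref{non-graded-threorem}, which already gives the general shape of an arbitrary ideal, and then identifying precisely which such ideals are semiprime. First I would dispose of the graded case: if $S = B_H$ and $X$ is empty (equivalently $A = I(H,S)$), then $A$ is semiprime by the remark immediately preceding the theorem, since $L/I(H,S)$ is again a Leavitt path algebra and therefore has zero prime radical. So the substance lies in the non-graded situation, where Theorem~\ref{non-graded-threorem} tells us $A = I(H,S) + \sum_{i \in X}\langle f_i(c_i)\rangle$ with each $c_i$ a cycle without exits in $\bar{E} := E\backslash(H,S)$ and each $f_i(x) \in K[x]$ having nonzero constant term. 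The theorem's content is then exactly the claim that $A$ is semiprime if and only if each $f_i$ is (up to the unit $x^m$) a product of \emph{distinct} irreducible polynomials, i.e.\ squarefree.

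The key technical tool I would use is the isomorphism $L/I(H,S) \cong L_K(\bar{E})$, which reduces everything to studying the image $\bar{A} = \sum_{i\in X}\langle f_i(\bar{c}_i)\rangle$ of $A$ inside $L_K(\bar E)$, where $\bar c_i$ is now a cycle \emph{without exits}. Since $A$ is semiprime if and only if $A/I(H,S)$ is a semiprime ideal of the quotient (intersections of primes pull back and push forward correctly across the quotient map, because $I(H,S) \subseteq A$), it suffices to characterize when $\bar A$ is semiprime in $L_K(\bar E)$. The crucial structural fact about a cycle $c$ without exits based at a vertex $u$ is that the ideal it generates is isomorphic, as a ring, to a matrix ring over $K[x,x^{-1}]$ (the corner $uL_K(\bar E)u$ at a vertex of the cycle is $\cong K[x,x^{-1}]$, and the full ideal $\langle c\rangle$ is a matrix ring over this corner, possibly of infinite size but with local units). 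Moreover distinct cycles without exits generate ideals that meet only in $\{0\}$ and have no vertices in common, so the sum $\sum_i \langle f_i(\bar c_i)\rangle$ is essentially a direct sum of independent pieces. This lets me transfer the question, one cycle at a time, to the Laurent polynomial ring, where Lemma~\ref{semiprime in LaurentPolyRing} applies verbatim: $\langle f_i(x)\rangle$ is semiprime in $K[x,x^{-1}]$ precisely when $f_i$ is a product of distinct irreducibles. Semiprimeness is Morita-invariant (it passes between a ring with local units and matrix rings over it, and is preserved under the relevant direct-sum decomposition), so $\bar A$ is semiprime iff each $f_i$ is squarefree.

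I expect the main obstacle to be the careful verification that the correspondence $\langle f_i(\bar c_i)\rangle \leftrightarrow \langle f_i(x)\rangle$ genuinely preserves and reflects semiprimeness, and that the ideals coming from distinct cycles interact trivially. Two points need real care here. First, the reduction to $K[x,x^{-1}]$ goes through a matrix ring over $K[x,x^{-1}]$ with possibly infinitely many rows and columns (since the set of vertices feeding into the cycle can be infinite), so I must confirm that the Morita-type transfer of semiprimeness works for such non-unital matrix rings; this is standard for rings with local units but should be stated explicitly. Second, when $X$ has more than one element I need to know that $\langle f_i(\bar c_i)\rangle \cap \langle f_j(\bar c_j)\rangle$ contributes nothing pathological --- because the cycles $\bar c_i$ are exit-free and distinct, their vertex supports are disjoint and the ideals they generate are orthogonal, making the sum behave like a direct product for the purpose of intersecting with prime ideals. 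Once these two structural facts are in place, the forward and reverse implications both follow: a non-squarefree $f_i$ yields an ideal $B$ with $B^k \subseteq A$ but $B \not\subseteq A$ (exactly as in the converse half of Lemma~\ref{semiprime in LaurentPolyRing}, lifted through the cycle), certifying non-semiprimeness, while squarefree $f_i$'s exhibit $A$ as an intersection of prime ideals built from the primes of each $K[x,x^{-1}]$-piece together with the graded primes of $L/I(H,S)$.
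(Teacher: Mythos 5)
Your proposal follows essentially the same route as the paper's proof: reduce modulo $I(H,S)$, decompose the ideal generated by the exit-free cycles as a direct sum of pieces $M_i \cong M_{\Lambda_i}(K[x,x^{-1}])$, transfer semiprimeness across the Morita equivalence with $K[x,x^{-1}]$ (via \cite{AM} for rings with local units), and invoke Lemma~\ref{semiprime in LaurentPolyRing}; you also correctly flag the same two delicate points (the non-unital Morita transfer and the orthogonality of ideals from distinct cycles) that the paper handles. The only cosmetic difference is in the reverse implication, where the paper passes from semiprimeness in $M$ to semiprimeness in $L/I(H,S)$ via the short exact sequence $0\rightarrow M/\bar{A}\rightarrow \bar{L}/\bar{A}\rightarrow \bar{L}/M\rightarrow 0$ rather than by directly exhibiting $A$ as an intersection of primes.
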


\begin{proof}
We first observe that if $A$ is a non-graded ideal, then by Theorem~\ref{non-graded-threorem}, $A=I(H,S)+\sum_{i\in X} \langle f_{i}(c_{i}) \rangle$, where $X$ is a nonempty index set, each $f_{i}(x)\in K[x]$ has nonzero constant term, and each $c_{i}$ is a cycle without exits in $E\backslash(H,S)$, based at a vertex $v_i$. Moreover, we may assume that $c_i^0 \neq c_j^0$ for $i\neq j$. In $L/I(H,S)$, as noted in~\cite[Proposition 3.5]{AAS} (see also~\cite[Theorem 2.7.3]{AAS-1}), the ideal $M$ generated by $\{c_{i}^{0}:i\in X\}$ decomposes as $M=\bigoplus_{i\in X} M_{i}$, where $M_{i}= \langle c_{i}^{0} \rangle \cong M_{\Lambda_{i}}(K[x,x^{-1}])$ is the ring of $\Lambda_{i}\times\Lambda_{i}$ matrices with at most finitely many nonzero entries, and $\Lambda_{i}$ is the set of all paths that end at $v_{i}$ but do not include the entire $c_{i}$. Moreover, given two paths $r,p \in \Lambda_{i}$, the isomorphism $\theta_i : M_{i} \to M_{\Lambda_{i}}(K[x,x^{-1}])$ takes $rc_i^kp^*$ to $x^ke_{rp}$ for all $k \in \Z$, where $e_{rp}$ is a matrix unit. Clearly \[A/I(H,S)=\bigoplus_{i\in X} \langle f_{i}(c_{i}) \rangle \subseteq \bigoplus_{i\in X} M_{i}.\]

Assume that (1) holds. If $A$ is a graded ideal, then $A=I(H,S)$, by the remarks in Section~\ref{LPA-subsection}, and we are done. Let us therefore suppose that $A$ is a non-graded ideal, and hence $A=I(H,S)+\sum_{i\in X} \langle f_{i}(c_{i}) \rangle$, with $X$, $f_i$, and $c_i$ as in the preceding paragraph. Also, as before, \[A/I(H,S)=\bigoplus_{i\in X}  \langle f_{i}(c_{i}) \rangle\subseteq \bigoplus_{i\in X} M_{i}=M.\] As $A$ is a semiprime ideal, $A/I(H,S)$ is semiprime in $L/I(H,S)$. Since $M$ is a graded ideal, and hence a ring with local units (namely, of sums of distinct vertices of $E^0$ in $M$), it is easy to see that the intersection of any prime ideal of $L/I(H,S)$ with $M$ is again a prime ideal of $M$. Hence $A/I(H,S)$ is also a semiprime ideal of $M$, which implies that $\langle f_{i}(c_{i}) \rangle$ is a semiprime ideal of $M_{i}$. Now
\[M_{i}\overset{\theta_i}{\cong} M_{\Lambda_{i}}(K[x,x^{-1}]),\] 
and it is straightforward to show that  the ideals of $M_{\Lambda_{i}}(K[x,x^{-1}])$ are of the form $M_{\Lambda_{i}}(\langle f(x)\rangle)$ for various $f(x) \in K[x,x^{-1}]$. From the description of the isomorphism $\theta_i$ above, it follows that the ideal $\langle f_{i}(c_{i})\rangle$ of $M_{i}$ corresponds to the ideal $M_{\Lambda_{i}}(\langle f_{i}(x)\rangle)$. Since $M_{\Lambda_{i}}(K[x,x^{-1}])$ is Morita equivalent to the ring $K[x,x^{-1}]$, the ideal lattices of these rings are isomorphic, and under this isomorphism prime and semiprime ideals of $M_{\Lambda_{i}}(K[x,x^{-1}])$ correspond to prime and semiprime ideals, respectively, of $K[x,x^{-1}]$ (see e.g.,  \cite[Propositions 3.3 and 3.5]{AM}). Since $\langle f_{i}(c_{i})\rangle$ is a semiprime ideal of $M_{i}$, we see that $\langle f_{i}(x)\rangle$ is a semiprime ideal of $K[x,x^{-1}]$. By Lemma \ref{semiprime in LaurentPolyRing}, we then conclude that $f_{i}(x)$ is a product of distinct irreducible polynomials in $K[x,x^{-1}]$, proving (2).

Conversely, assume that (2) holds. Thus $A=I(H,S)+\sum_{i\in X} \langle f_{i}(c_{i}) \rangle$, where we may assume that $X$ is nonempty, the stated condition holds for the $c_{i}$, and each $f_{i} (x)$ is a product of distinct irreducible polynomials in $K[x,x^{-1}]$. As noted in the first paragraph of the proof, we have 
\[A/I(H,S)=\bigoplus\limits_{i\in X} \langle f_{i}(c_{i})\rangle \subseteq \bigoplus_{i\in X} M_{i}=M,\] 
where $M_{i}\cong M_{\Lambda_{i}}(K[x,x^{-1}])$ for each $i \in X$. Now, by Lemma~\ref{semiprime in LaurentPolyRing}, $\langle f_{i}(x)\rangle$ is a semiprime ideal of $K[x,x^{-1}]$ and, as noted earlier, the Morita equivalence of $K[x,x^{-1}]$ with $M_{\Lambda_{i}}(K[x,x^{-1}])$ then implies that $\langle f_{i}(c_{i})\rangle$ is semiprime in $M_{i}$. Since this is true for all $i\in X$, and since we may view $\bigoplus_{i\in X} M_{i}$ as a direct sum of rings, we conclude that $A/I(H,S)$ is a semiprime ideal of $\bigoplus_{i\in X} M_{i}=M$. Now consider the exact sequence \[0\rightarrow M/\bar{A} \rightarrow\bar{L}/\bar{A}\rightarrow\bar{L}/M\rightarrow 0,\] where $\bar{A}=A/I(H,S)$ and $\bar{L}=L/I(H,S)$. The graded ideal $M$ is semiprime in $\bar{L}$, and hence $\bar{L}/M$ is a semiprime ring. Since $M/\bar{A}$ is also a semiprime ring, we conclude that $\bar{L}/\bar{A}$ is semiprime as well. Finally, since $L/A\cong\bar{L}/\bar{A}$, we see that $A$ is a semiprime ideal of $L$, proving (1).
\end{proof}

Theorem~\ref{characterization} has the following consequence; in the context of results to be presented in the sequel, this property of semiprime ideals is somewhat unexpected.

\begin{proposition} \label{semiprimeLattice} 
Let $E$ be a graph, $K$ a field, and $L:=L_{K}(E)$. Then the semiprime ideals of $L$ form a complete sublattice $S$ of the lattice of ideals of $L$, that is, $S$ is closed under taking arbitrary sums and intersections.

In particular, the union of a chain of semiprime ideals of $L$ is again a semiprime ideal.
\end{proposition}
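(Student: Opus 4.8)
The plan is to use the explicit description of semiprime ideals provided by Theorem~\ref{characterization}. Every semiprime ideal $A$ has the shape $A = I(H,S) + \sum_{i \in X} \langle f_i(c_i)\rangle$, where $H = A \cap E^0$, the $c_i$ are cycles without exits in $E\backslash(H,S)$, and each $f_i(x)$ is a product of distinct irreducible polynomials in $K[x,x^{-1}]$. So the strategy is: take an arbitrary family $\{A_\lambda : \lambda \in \Lambda\}$ of semiprime ideals, form the sum $\sum_\lambda A_\lambda$ and the intersection $\bigcap_\lambda A_\lambda$, and in each case verify that the result again has a presentation of the form in condition (2) of Theorem~\ref{characterization}, whence it is semiprime. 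Since a complete sublattice need only be shown closed under arbitrary sums and arbitrary intersections, these two verifications suffice, and the final statement about unions of chains then follows immediately because the union of an ascending chain of ideals equals their sum.

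For the sum $B := \sum_\lambda A_\lambda$, I would first identify its graded part and the relevant data $H' = B \cap E^0$, $S' = \{v \in B_{H'} : v^{H'} \in B\}$. The graded parts $I(H_\lambda, S_\lambda)$ combine into a single graded ideal (graded ideals are closed under sums, being a complete lattice in bijection with admissible pairs), and this graded piece is automatically semiprime. The subtle point is to rewrite $B$ relative to its own graded part $I(H',S')$ and check that in the quotient $L/I(H',S')$ the non-graded contributions still come from cycles without exits with square-free Laurent-polynomial data. The key tool is again the decomposition $M = \bigoplus_i M_i$ with $M_i \cong M_{\Lambda_i}(K[x,x^{-1}])$ from the proof of Theorem~\ref{characterization}: after passing to the quotient, the images of the various $\langle f_i(c_i)\rangle$ live in the components $M_i$, and within a fixed component $M_i$ the sum of ideals corresponds, under the Morita equivalence with $K[x,x^{-1}]$, to the sum of the corresponding ideals $\langle f_i(x)\rangle$. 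A sum of ideals in the PID $K[x,x^{-1}]$ is generated by the gcd of the generators, and a gcd of square-free polynomials is again square-free; hence the summed data remains a product of distinct irreducibles, confirming condition (2) for $B$.

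For the intersection $C := \bigcap_\lambda A_\lambda$, the reasoning is parallel but uses that semiprimeness is in fact characterized intrinsically (an ideal is semiprime iff it is an intersection of primes), so an arbitrary intersection of semiprime ideals is trivially semiprime by that definition alone; this direction is essentially formal and could even be dispatched without Theorem~\ref{characterization}, simply because each $A_\lambda$ is an intersection of primes and hence so is $C$. I would still cross-check consistency with the generator description: in each matrix component the intersection of ideals of $K[x,x^{-1}]$ is generated by the lcm of the generators, and the lcm of square-free polynomials is square-free, so the description in condition (2) is preserved here as well.

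The main obstacle I anticipate is bookkeeping for the sum rather than the intersection. When forming $B$, the graded part can genuinely grow — absorbing some of the previously non-graded cyclic data — because adding new generators can force new vertices, breaking vertices, or even entire cycles into the graded ideal (a cycle without exits in one quotient graph may acquire an exit, or be swallowed, after enlarging $H$ and $S$). Controlling how the cycles $c_i$ and their polynomials behave under this change of quotient graph $E\backslash(H',S')$, and verifying that no non-square-free data can arise in the new component decomposition, is the delicate step. The cleanest route is to argue at the level of the quotient ring $L/I(H',S') \cong L_K(E\backslash(H',S'))$ and apply Theorem~\ref{characterization} there, reducing everything to the square-free-preservation facts about gcd and lcm in the principal ideal domain $K[x,x^{-1}]$.
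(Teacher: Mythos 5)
Your proposal matches the paper's proof in essentially every respect: intersections are dispatched formally (an intersection of intersections of primes is an intersection of primes), and sums are handled via Theorem~\ref{characterization} by rewriting $\sum_\lambda A_\lambda$ relative to its own graded part, discarding the cyclic generators absorbed into that graded part, and confirming the surviving polynomial data is still a product of distinct irreducibles. In fact you make explicit a point the paper leaves implicit --- that when the same exit-free cycle carries polynomials coming from several $A_\lambda$, the resulting ideal of $K[x,x^{-1}]$ is generated by their gcd, which is again square-free --- so your account is, if anything, slightly more careful than the published one.
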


\begin{proof}
Suppose that $\{A_{t}:t\in T\}$ is a set of semiprime ideals of $L$. Since each $A_{t}$ is an intersection of prime ideals, clearly $\bigcap_{t\in T} A_{t}$ is a semiprime ideal of $L$, showing that $S$ is closed under taking intersections.

Setting $H_{t} = A_{t}\cap E^{0}$ for each $t \in T$, by Theorem~\ref{characterization}, $A_{t}=\gr(A_{t})+\sum_{i_{t}\in X_{t}} \langle f_{i_{t}}(c_{i_{t}})\rangle$, where each $c_{i_{t}}$ is a cycle without exits in $E^{0}\backslash H_{i_{t}}$, based at a vertex $v_{i_{t}}$, and each $f_{i_{t}}(x)$ is a product of distinct irreducible polynomials in $K[x,x^{-1}]$. Let $A=\sum_{t\in T} A_{t}$ and $H = A\cap E^{0}$. Since a sum of graded ideals is again a graded ideal, $\sum_{t\in T} \gr(A_{t})\subseteq \gr(A)$. It is possible that $\gr(A)$ contains some of the vertices $v_{i_{t}}$ and hence the ideals $\langle f_{i_{t}}(c_{i_{t}})\rangle$. Removing such ideals from the sum, we can write $A=\gr(A)+ \sum_{i\in I} \langle f_{i}(c_{i})\rangle$, where each $c_{i}$ is a cycle without exits in $E^{0}\backslash H$, and each $f_{i}(x)$ is a product of distinct irreducible polynomials in $K[x,x^{-1}]$. Therefore, by Theorem \ref{characterization}, $A$ is a semiprime ideal of $L$, showing that $S$ is closed under taking sums.

The final claim follows from the fact that the union of a chain of ideals is equal to its sum.
\end{proof}

We conclude this section by describing the graphs $E$ for which every ideal of $L_K(E)$ is semiprime, and the ones for which every semiprime ideal of $L_K(E)$ is prime.

\begin{proposition} \label{All semiprime} 
Let $E$ be a graph, $K$ a field, and $L:=L_{K}(E)$. Then the following are equivalent.
\begin{enumerate}
\item Every ideal of $L$ is semiprime.

\item Every ideal of $L$ is graded.

\item The graph $E$ satisfies Condition {\rm (K)}.
\end{enumerate}
\end{proposition}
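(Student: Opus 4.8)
The plan is to obtain $(1)\Leftrightarrow(2)$ directly from the structure theorems already in hand, and then to tie in $(3)$ through the classical fact that all ideals of $L_K(E)$ are graded exactly when $E$ satisfies Condition~(K). The implication $(2)\Rightarrow(1)$ is immediate from the observation recorded just before Theorem~\ref{characterization}: every graded ideal $I(H,S)$ is semiprime because $L/I(H,S)\cong L_K(E\backslash(H,S))$ is itself a Leavitt path algebra and hence has zero prime radical. So if all ideals are graded, all ideals are semiprime.

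For the contrapositive of $(1)\Rightarrow(2)$, I would suppose $L$ has a non-graded ideal $B$ and put $H=B\cap E^0$ and $S=\{v\in B_H:v^H\in B\}$. By Theorem~\ref{non-graded-threorem} the quotient graph $E\backslash(H,S)$ then contains at least one cycle $c$ without exits. I manufacture a non-semiprime ideal carried by $c$: let $f(x)=(x-1)^2\in K[x]$, which has nonzero constant term but is not a product of distinct irreducibles, and set $A=I(H,S)+\langle f(c)\rangle$. Since $f$ is a non-unit of $K[x,x^{-1}]$, the ideal $\langle f(c)\rangle$ contains no vertex of $E$ and no element $v^H$, so $A\cap E^0=H$ and the breaking-vertex data attached to $A$ is again $S$; thus Theorem~\ref{characterization} applies to $A$ with exactly this $(H,S)$. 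That theorem then detects $A$ as non-semiprime precisely because its sole polynomial generator $(x-1)^2$ is not a product of distinct irreducibles in $K[x,x^{-1}]$. Hence $(1)$ fails, and $(1)\Leftrightarrow(2)$ follows using Theorems~\ref{non-graded-threorem} and~\ref{characterization} alone. The one point needing care here is the claim that $A\cap E^0=H$, which is what lets Theorem~\ref{characterization} read off the offending polynomial.

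It remains to connect $(3)$ through the equivalence $(2)\Leftrightarrow(3)$. The direction $(3)\Rightarrow(2)$ can be recovered cheaply from Theorem~\ref{non-graded-threorem}: a cycle $c$ without exits in a quotient $E\backslash(H,S)$ must run entirely through the unprimed vertices $E^0\backslash H$, since every edge of the quotient has an unprimed source and so the primed vertices are sinks; therefore $c$ is a genuine cycle of $E$ based at some $u\notin H$. Condition~(K) supplies a second simple closed path $\gamma\neq c$ based at $u$, and comparing $\gamma$ with $c$ at their first point of divergence produces an edge $f$ with $s(f)=s(e_j)$ and $f\neq e_j$ for some edge $e_j$ of $c$. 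If $r(f)\notin H$, then $f$ is an exit for $c$ inside the quotient, contradicting the choice of $c$; if $r(f)\in H$, then the remainder of $\gamma$ gives a path from $r(f)$ to $u$, so $r(f)\geq u$, and since $H$ is hereditary this forces $u\in H$, again a contradiction. Thus no quotient of a Condition~(K) graph carries a cycle without exits, so by Theorem~\ref{non-graded-threorem} every ideal of $L$ is graded. The reverse direction $(2)\Rightarrow(3)$ — that a failure of Condition~(K) must produce a non-graded ideal, equivalently a quotient with a cycle without exits — is the genuinely harder classical step, as it requires building an explicit admissible pair from a Condition~(K)-violating vertex; this is where I expect the main obstacle to lie, and I would cite it from the literature (e.g.~\cite{AAS-1}) rather than reprove it.
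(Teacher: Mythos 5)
Your proposal is correct and follows essentially the same route as the paper: the paper also proves $(1)\Rightarrow(2)$ by extracting a cycle $c$ without exits from a non-graded ideal via Theorem~\ref{non-graded-threorem} and forming the non-semiprime ideal $A=\gr(I)+\langle (v+c)^{2}\rangle$ (certifying non-semiprimeness directly by exhibiting $B=\gr(I)+\langle v+c\rangle$ with $B^{2}\subseteq A$ but $B\not\subseteq A$, rather than invoking Theorem~\ref{characterization}), obtains $(2)\Rightarrow(1)$ from the same remark about graded quotients being Leavitt path algebras with zero prime radical, and simply cites $(2)\Leftrightarrow(3)$ as well known. Your explicit verifications of $(3)\Rightarrow(2)$ and of $A\cap E^{0}=H$ are details the paper omits, but they are sound.
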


\begin{proof}
Assume that (1) holds. Suppose also that there is a non-graded ideal $I$ of $L$, and let $H=I\cap E^{0}$. By Theorem~\ref{non-graded-threorem}, $I=\gr(I)+\sum_{i\in X}\langle f_{i}(c_{i})\rangle$, where each $c_{i}$ is a cycle without exits in $E^{0}\backslash H$, based at a vertex $v_{i}$, and each $f_{i}(x)\in K[x]$. Then for any $i \in X$, the ideal $A=\gr(I)+\langle (v_{i}+c_{i})^{2}\rangle$ is not semiprime, since the ideal $B=\gr(I)+\langle v_{i}+c_{i}\rangle$ satisfies $B^{2} \subseteq A$ and $B\not\subseteq A$. This contradicts (1), and hence every ideal of $L$ must be graded, proving (2).

Now (2) implies (1), as noted after Lemma~\ref{semiprime in LaurentPolyRing}, and the equivalence of (2) and (3) is well-known (see~\cite[Theorem 6.16]{T} or~\cite[Theorem 3.3.11]{AAS-1}).
\end{proof}

\begin{proposition} \label{semiprimePrime} 
Let $E$ be a graph, $K$ a field, and $L:=L_{K}(E)$. Then the following are equivalent.
\begin{enumerate}
\item Every semiprime ideal of $L$ is a prime ideal.

\item The prime ideals of $L$ form a chain under set inclusion.

\item Every ideal of $L$ is both  graded and prime, and the ideals of $L$ form a chain under set inclusion.

\item The graph $E$ satisfies Condition {\rm (K)}, and the admissible pairs of $E$ form a
chain under the partial order of the admissible pairs.
\end{enumerate}
\end{proposition}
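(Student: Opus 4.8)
The plan is to establish the equivalences through the cycle $(1)\Leftrightarrow(2)$ together with $(2)\Rightarrow(4)\Rightarrow(3)\Rightarrow(1)$, leaning throughout on Proposition~\ref{IntersectionProperty}, Theorems~\ref{non-graded-threorem} and~\ref{characterization}, and especially Proposition~\ref{All semiprime}, which identifies Condition~(K) with every ideal being graded (hence semiprime).

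For $(1)\Rightarrow(2)$ I would argue contrapositively: if two prime ideals $P,Q$ are incomparable, then $A=P\cap Q$ is semiprime (being an intersection of primes) but not prime, since $PQ\subseteq A$ while neither $P\subseteq A$ nor $Q\subseteq A$, as either inclusion would force $P$ and $Q$ to be comparable. For the converse $(2)\Rightarrow(1)$, any semiprime ideal is by definition an intersection $\bigcap_\lambda P_\lambda$ of prime ideals; if all primes of $L$ form a chain, then this family is totally ordered, hence downward directed, so Proposition~\ref{IntersectionProperty} makes the intersection prime.

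The crux is $(2)\Rightarrow(4)$, and within it the derivation of Condition~(K). Assuming (2), hence also (1), I would prove Condition~(K) by contradiction. If it fails, then by Proposition~\ref{All semiprime} some ideal is non-graded, so by Theorem~\ref{non-graded-threorem} there is a cycle without exits $c$ in a quotient graph $E\backslash(H,S)$. Choosing two distinct irreducible polynomials $p_1,p_2\in K[x]$ with nonzero constant term (these exist over any field, since among the infinitely many monic irreducibles only $x$ has zero constant term) and setting $f=p_1p_2$, the ideal $A=I(H,S)+\langle f(c)\rangle$ is semiprime by Theorem~\ref{characterization}. Passing to $\bar L=L/I(H,S)$ and the matrix picture $M_i\cong M_{\Lambda_i}(K[x,x^{-1}])$ of the proof of Theorem~\ref{characterization}, one sees that $A$ equals $(I(H,S)+\langle p_1(c)\rangle)\cap(I(H,S)+\langle p_2(c)\rangle)$, an intersection of two ideals neither of which is contained in the other; since their product lies in $A$, the ideal $A$ is not prime, contradicting (1). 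Thus Condition~(K) holds, so Proposition~\ref{All semiprime} makes every ideal graded, hence semiprime, hence (by (1)) prime; as all primes form a chain by (2), all ideals form a chain. Under the order-preserving correspondence between admissible pairs and graded ideals recalled in Section~\ref{LPA-subsection}, this chain of graded ideals transports to a chain of admissible pairs, giving (4).

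Finally, $(4)\Rightarrow(3)$ is bookkeeping: Condition~(K) makes every ideal graded by Proposition~\ref{All semiprime}; the chain of admissible pairs transports, via the same order-isomorphism, to a chain of all ideals; and any ideal $I$ is then prime because for ideals $B,C$ with $BC\subseteq I$, comparability gives, say, $B\subseteq C$, whence $B^2\subseteq BC\subseteq I$, and semiprimeness of the graded ideal $I$ forces $B\subseteq I$. Lastly $(3)\Rightarrow(1)$ is immediate, since in (3) every ideal, and in particular every semiprime ideal, is prime. I expect the construction of the semiprime non-prime ideal in $(2)\Rightarrow(4)$, together with verifying through the matrix and Morita description that it genuinely fails to be prime, to be the main technical obstacle; the remaining steps follow directly from the cited results.
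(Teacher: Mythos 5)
Your proof is correct, but it takes a genuinely different route from the paper's, which argues $(1)\Rightarrow(2)\Rightarrow(3)\Rightarrow(4)\Rightarrow(1)$. The substantive divergence is in how the chain condition on primes forces every ideal to be graded. The paper assumes a non-graded ideal $A$ exists, first shows $\gr(A)$ is prime (being an intersection of a chain of primes, via Proposition~\ref{IntersectionProperty}), deduces from Theorem~\ref{primeclass} that $E^{0}\setminus H$ is downward directed and so admits a unique exit-free cycle $c$, and then exhibits \emph{two incomparable prime} ideals $\gr(A)+\langle p(c)\rangle$ and $\gr(A)+\langle q(c)\rangle$, contradicting (2) directly. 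You instead exhibit a \emph{single semiprime non-prime} ideal $I(H,S)+\langle p_{1}(c)p_{2}(c)\rangle$, certified semiprime by Theorem~\ref{characterization}, contradicting (1); this skips the intermediate primeness of $\gr(A)$ and the downward-directedness analysis, at the cost of re-entering the matrix/Morita picture to verify non-primeness (you could shortcut that verification by noting that Theorem~\ref{primeclass}(3) requires the polynomial to be irreducible up to associates, which $p_{1}p_{2}$ is not). Your $(4)\Rightarrow(3)$ also gives a more elementary proof that every ideal is prime --- comparability turns $BC\subseteq I$ into $B^{2}\subseteq I$, and semiprimeness of graded ideals finishes --- where the paper again invokes intersections of chains of primes. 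Both arguments rest on the same supporting results (Propositions~\ref{IntersectionProperty} and~\ref{All semiprime}, Theorems~\ref{non-graded-threorem}, \ref{primeclass}, \ref{characterization}, and the order isomorphism with admissible pairs), so the difference is organizational: yours leans harder on Theorem~\ref{characterization}, the paper's on Theorem~\ref{primeclass}.
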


\begin{proof}
Assume that (1) holds. To prove (2), suppose, on the contrary, that there are two prime ideals $A,B$ of $L$ such that $A\nsubseteq B$ and $B\nsubseteq A$. Choose an element $a\in A\backslash B$ and an element $b\in B\backslash A$. Since $aLb$ is contained in both $A$ and $B$, we have $aLb\subseteq A\cap B$. Now $A\cap B$ is a semiprime ideal and is hence prime, by supposition. Thus one of $a$ and $b$ must belong to $A\cap B$. This means that either $a\in B$ or $b\in A$; a contradiction. Hence the prime ideals of $L$ form a chain under set inclusion, proving (2).

Assume that (2) holds. We now claim that every ideal of $L$ is graded. Suppose that there is a non-graded ideal $A$ in $L$, and set $H = A\cap E^{0}$. Now $\gr(A)$ is a semiprime ideal and hence must be prime, since by hypothesis it is the intersection of a chain of prime ideals, so that Proposition \ref{IntersectionProperty} applies. This means that $E^{0}\backslash H$ is downward directed, by Theorem~\ref{primeclass}, and hence $E^{0}\backslash H$ can contain at most one cycle without exits. Thus, by Theorem~\ref{non-graded-threorem}, $A$ must be of the form $A=\gr(A)+\langle f(c)\rangle$, where $c$ is the unique cycle without exits in $E^{0}\backslash H$ and $f(x)\in K[x]$. Now for any two distinct irreducible $p(x), q(x) \in K[x]$ with nonzero constant terms, the ideals $P=\gr(A)+\langle p(c)\rangle$ and $Q=\gr(A)+\langle q(c)\rangle$ are prime, by Theorem~\ref{primeclass}, with the property that  $P\nsubseteq Q$ and $Q\nsubseteq P$, contradicting (2). Thus every ideal $I$ of $L$ must be graded, and hence also semiprime.   Since the prime ideals of $L$ form a chain, intersections of prime ideals of $L$ are also prime (again using Proposition \ref{IntersectionProperty}). Therefore every ideal of $L$ is prime, proving (3).

Assume that (3) holds. Then $E$ satisfies Condition (K), by Proposition~\ref{All semiprime}, and the admissible pairs of $E$ form a chain, by the comments in Section~\ref{LPA-subsection}, proving (4).

Finally, if (4) holds, then every ideal of $L$ is graded, by Proposition~\ref{All semiprime}. Since admissible pairs correspond to generating sets for graded ideals, this means that the ideals of $L$ form a chain. Since semiprime ideals are intersections of prime ideals, and the intersection of any chain of prime ideals is prime, (1) follows.
\end{proof}

\section{The Kaplansky Conjecture and the Kaplansky Property}

In this section the Kaplansky conjecture regarding Theorem (K-1), mentioned in the Introduction, is shown to be true for Leavitt path algebras $L$. We then show that the Kaplansky Property holds for semiprime ideals of $L$, and give sufficient conditions under which prime ideals of $L$ satisfy this property. In particular, we use the Kaplansky Property to characterize the totally ordered sets that can be realized as prime spectra of Leavitt path algebras.

Our first result shows that Kaplansky's conjecture holds in Leavitt path algebras, and it is stated in a slightly stronger form, in the sense that we do not need the additional hypothesis that the ring is semiprime (as all Leavitt path algebras are semiprime), and that we only need to consider graded prime ideals. 

Recall that a ring $R$ is called \emph{von Neumann regular} if for every $r \in R$ there exists a $p \in R$ such that $r=rpr$.

\begin{proposition} \label{KapConj} 
Let $E$ be a graph, $K$ a field, and $L:=L_{K}(E)$. Then $L$ is von Neumann regular if and only if $L/P$ is von Neumann regular for any graded prime ideal $P$ of $L$.
\end{proposition}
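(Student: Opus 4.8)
The plan is to reduce the statement to the known characterization of von Neumann regularity for Leavitt path algebras, namely that $L_K(E)$ is von Neumann regular if and only if the graph $E$ is acyclic (Abrams--Rangaswamy). Granting this, the proposition becomes an essentially graph-theoretic assertion, and the only substantive work lies in the backward implication, which I would handle in contrapositive form.

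For the forward direction I would invoke the elementary ring-theoretic fact that any quotient of a von Neumann regular ring is again von Neumann regular: if $r = rpr$ in $L$, then passing to $L/P$ gives $\bar r = \bar r\,\bar p\,\bar r$. This holds for every two-sided ideal $P$, hence in particular for every graded prime ideal, and no graph theory is needed here.

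For the backward direction I argue the contrapositive: assuming $L$ is not von Neumann regular, I produce a graded prime ideal $P$ with $L/P$ not von Neumann regular. By the characterization, $L$ not von Neumann regular means $E$ contains a cycle $c$, based at some vertex $v$. The key construction is the set $H = \{u \in E^0 : u \not\geq v\}$. I would verify that $H$ is hereditary and saturated: hereditariness is immediate from transitivity of $\geq$, and saturation follows from the contrapositive observation that a regular vertex all of whose out-neighbours fail to reach $v$ cannot itself reach $v$ (any path from a regular vertex to $v$ has a first edge whose range still reaches $v$). Its complement is exactly $E^0\setminus H = \{u : u \geq v\}$, which is downward directed, since for any $u_1, u_2 \geq v$ the vertex $v$ itself is a common lower bound. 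Then Theorem~\ref{primeclass}(1) shows that $P := I(H, B_H)$ is a graded prime ideal, and it is proper because $v \notin H$.

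It remains to see that $L/P$ is not von Neumann regular, for which I use the isomorphism $L/P \cong L_K(E\backslash(H, B_H))$ together with the acyclicity characterization once more. Because the vertices along a cycle are mutually reachable, $v \notin H$ forces $c^0 \cap H = \emptyset$; and since $S = B_H$, the quotient graph has no primed vertices, so every vertex and every edge of $c$ (all of whose ranges lie in $c^0 \subseteq E^0\setminus H$) survives into $E\backslash(H,B_H)$ and $c$ remains a genuine cycle there. Hence the quotient graph is not acyclic, so $L/P$ is not von Neumann regular, completing the contrapositive. I expect the main obstacle to be precisely the construction of $H$ and the two verifications that $E^0\setminus H$ is downward directed (so that $P$ is prime) while the cycle is simultaneously preserved in the quotient (so that $L/P$ fails regularity); everything else is either a citation or a one-line check.
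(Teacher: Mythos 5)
Your proposal is correct and follows essentially the same route as the paper: both directions rest on the Abrams--Rangaswamy characterization (von Neumann regular iff acyclic), and your key construction $H=\{u\in E^0: u\ngeq v\}$ with $P=I(H,B_H)$ is exactly the one used in the paper's proof, merely phrased as a contrapositive rather than a proof by contradiction. Your added verifications (saturation of $H$, survival of the cycle in the quotient graph) are details the paper leaves implicit, and they check out.
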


\begin{proof}
Assume that $L/P$ is von Neumann regular for all graded prime ideals $P$. We claim that $E$ contains no cycles. Suppose, on the contrary, that there is a cycle $c$, based at a vertex $v$, in the graph $E$, and let $H=\{u\in E^{0}:u\ngeq v\}$. Clearly $H$ is a hereditary saturated set of vertices and $E^{0}\backslash H$ is downward directed. Now $P=I(H,B_{H})$ is a prime ideal, by Theorem~\ref{primeclass}, and $L/P\cong L_{K}(E\backslash(H,B_{H}))$, by the remarks in Section~\ref{LPA-subsection}. By hypothesis, $L_{K} (E\backslash(H,B_{H}))$ is von Neumann regular, and by~\cite[Theorem 1]{AR}, $E\backslash(H,B_{H})$ must then be acyclic, contradicting $c$ being a cycle in $E\backslash(H,B_{H})$. Hence $E$ must be acyclic and we appeal to~\cite[Theorem 1]{AR} to conclude that $L$ is von Neumann regular.

The converse follows from the fact that any quotient of a von Neumann regular ring is again von Neumann regular.
\end{proof}

\begin{remark}
As mentioned in the Introduction, Fisher and Snider showed in~\cite[Theorem 1.1]{FS} that a semiprime ring $R$ with all its prime factor rings regular is von Neumann regular, provided the union of any ascending chain of semiprime ideals of $R$ is again a semiprime ideal. Since a Leavitt path algebra $L$ is always semiprime, and since Proposition~\ref{semiprimeLattice} shows that the union of an ascending chain of semiprime ideals in $L$ is semiprime, we can appeal to~\cite[Theorem 1.1]{FS} to get an alternate proof of Proposition~\ref{KapConj}.
\end{remark}

Next we explore the Kaplansky Property (KAP). The following  proposition gives
conditions for (KAP) to hold for a given family of ideals in a ring $R$. The
argument used here is fundamentally due to Kaplansky~\cite[Theorem 11]{K-1}. For convenience, we say that a pair of ideals $(P,Q)$ in a ring is a \emph{Kaplansky pair}, if the ideals $P,Q$ satisfy the covering property stated in Theorem (K-2).

\begin{proposition}
\label{KAP} Let $R$ be an arbitrary ring. Suppose that $\mathcal{F}$ is a nonempty family of ideals of $R$ such that $\mathcal{F}$ is closed under taking unions and intersections of chains of ideals. Then {\rm (KAP)} holds for ideals in $\mathcal{F}$.
\end{proposition}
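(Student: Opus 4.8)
The plan is to follow the classical argument of Kaplansky, whose crux is a double application of Zorn's Lemma anchored by a single witness element. Throughout, I interpret ``$Q'$ covers $P'$'' to mean that no member of $\mathcal{F}$ lies strictly between $P'$ and $Q'$. Given $P \subsetneq Q$ in $\mathcal{F}$, I would first fix an element $x \in Q \setminus P$. This $x$ serves as a dividing line: I will push an ideal up as far as possible while keeping $x$ outside it, and then pull an ideal down as far as possible while keeping $x$ inside it.

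Concretely, I would first consider the family $\mathcal{A} = \{J \in \mathcal{F} : P \subseteq J \subseteq Q, \ x \notin J\}$, which is nonempty since $P \in \mathcal{A}$. For any chain in $\mathcal{A}$, its union lies in $\mathcal{F}$ by the hypothesis that $\mathcal{F}$ is closed under unions of chains, still sits between $P$ and $Q$, and still omits $x$ (an element of a union of a chain lies in one of its members). Hence Zorn's Lemma yields a maximal element $P' \in \mathcal{A}$. Dually, I would consider $\mathcal{B} = \{J \in \mathcal{F} : P' \subseteq J \subseteq Q, \ x \in J\}$, which is nonempty since $Q \in \mathcal{B}$. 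For any chain in $\mathcal{B}$, its intersection lies in $\mathcal{F}$ by closure under intersections of chains, still sits between $P'$ and $Q$, and still contains $x$ (if $x$ lies in every member it lies in the intersection). Thus the dual form of Zorn's Lemma yields a minimal element $Q' \in \mathcal{B}$. Note that $P \subseteq P' \subseteq Q' \subseteq Q$, and that $P' \subsetneq Q'$ because $x \notin P'$ while $x \in Q'$.

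It then remains to verify that $Q'$ covers $P'$. Suppose toward a contradiction that some $J \in \mathcal{F}$ satisfies $P' \subsetneq J \subsetneq Q'$; then $P \subseteq J \subseteq Q$, and I would split on whether $x \in J$. If $x \in J$, then $J \in \mathcal{B}$ and $J \subsetneq Q'$ contradicts the minimality of $Q'$; if $x \notin J$, then $J \in \mathcal{A}$ and $J \supsetneq P'$ contradicts the maximality of $P'$. Either way we reach a contradiction, so no such $J$ exists.

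The main obstacle is conceptual rather than computational: the naive attempt—simply taking a member of $\mathcal{F}$ maximal among those properly contained in $Q$ and containing $P$—fails, because a chain of ideals each strictly smaller than $Q$ can have union equal to $Q$, so Zorn's Lemma does not apply directly to that family. The role of the witness $x$ is precisely to repair this: the constraint ``$x \notin J$'' is preserved under unions of chains, while the complementary constraint ``$x \in J$'' is preserved under intersections of chains. This is exactly the point at which both closure hypotheses on $\mathcal{F}$ are used, and the key idea is recognizing that a single element $x$ can simultaneously govern both the upward and the downward optimization.
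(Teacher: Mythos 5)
Your proposal is correct and follows essentially the same argument as the paper: fix a witness $x \in Q \setminus P$, use Zorn's Lemma with closure under unions of chains to get $P'$ maximal omitting $x$, then use closure under intersections of chains to get $Q'$ minimal among members of $\mathcal{F}$ containing $P'$ and $x$ (the paper phrases this as minimality over ideals containing $P' + \langle x \rangle$, which is the same family), and verify the cover by splitting on whether $x$ lies in an intermediate ideal.
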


\begin{proof}
Suppose that $A\subsetneq B$ are two elements of $\mathcal{F}$, and let $x\in B\backslash A$. Since the union of a chain of ideals in $\mathcal{F}$ is again a member of $\mathcal{F}$, we apply Zorn's Lemma to obtain an ideal $M\in \mathcal{F}$ maximal with respect to the property that $A\subseteq M\subsetneq B$ and $x\notin M$. Let $C=M+\langle x\rangle$. Since the intersection of a chain of ideals in $\mathcal{F}$ is a member of $\mathcal{F}$, we can similarly apply Zorn's Lemma to obtain an ideal $N \in \mathcal{F}$ minimal with respect to the property that $C \subseteq N \subseteq B$. (Note that if $C \in \mathcal{F}$, then $C = N$.) We claim that $(M,N)$ is a Kaplansky pair. For, suppose that there is an ideal $P\in \mathcal{F}$ that satisfies $M\subsetneq P\subseteq N$. Then $x\in P$,  by the maximality of $M$, and hence $C\subseteq P$. The minimality of $N$ then implies that $P=N$. Thus (KAP) holds for ideals belonging to $\mathcal{F}$.
\end{proof}

\begin{corollary} \label{KAP for semiprimes} 
Let $E$ be a graph, $K$ a field, and $L:=L_{K}(E)$. Then the semiprime ideals of $L$ satisfy {\rm (KAP)}.
\end{corollary}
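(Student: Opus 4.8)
The plan is to obtain this as an immediate consequence of the two results already in hand, namely Proposition~\ref{semiprimeLattice} and Proposition~\ref{KAP}. Concretely, I would let $\mathcal{F}$ denote the family of all semiprime ideals of $L$, verify that $\mathcal{F}$ meets the hypotheses of Proposition~\ref{KAP} (nonempty, and closed under unions and intersections of chains of ideals), and then invoke that proposition directly to conclude that (KAP) holds for $\mathcal{F}$.

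The verification of the hypotheses is where the earlier work pays off. First I would observe that $\mathcal{F}$ is nonempty, since $\{0\}$ is semiprime in $L$ (the prime radical of every Leavitt path algebra being zero, as noted after Lemma~\ref{semiprime in LaurentPolyRing}). Next, Proposition~\ref{semiprimeLattice} states that the semiprime ideals form a complete sublattice of the ideal lattice of $L$, closed under arbitrary intersections and arbitrary sums; in particular $\mathcal{F}$ is closed under intersections of chains. To handle unions of chains I would appeal to the final observation in Proposition~\ref{semiprimeLattice}, that the union of a chain of ideals coincides with its sum, so that closure under sums already delivers closure under unions of chains. With both closure conditions and nonemptiness in place, applying Proposition~\ref{KAP} with $R = L$ and this $\mathcal{F}$ finishes the argument.

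Since the substantive content was established in Theorem~\ref{characterization} and Proposition~\ref{semiprimeLattice}, I do not expect a genuine obstacle here. The only point meriting a moment's care is the routine bookkeeping that the lattice-theoretic closure furnished by Proposition~\ref{semiprimeLattice} (arbitrary sums and intersections) indeed subsumes the weaker chain-closure hypotheses required by Proposition~\ref{KAP}.
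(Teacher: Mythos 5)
Your proposal is correct and matches the paper's proof exactly: the paper likewise cites Proposition~\ref{semiprimeLattice} for closure of the semiprime ideals under unions and intersections of chains and then applies Proposition~\ref{KAP}. Your extra remarks (nonemptiness via $\{0\}$, and unions of chains being sums) are routine details the paper leaves implicit.
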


\begin{proof}
By Proposition~\ref{semiprimeLattice}, the set of semiprime ideals of $L$ is closed under taking unions and intersections of chains. We then appeal to Proposition~\ref{KAP} to conclude that (KAP) holds for semiprime ideals of $L$.
\end{proof}

Since the intersection of a chain of prime ideals in any ring is again a prime ideal (Proposition \ref{IntersectionProperty}), the following corollary follows immediately from Proposition~\ref{KAP}.

\begin{corollary} \label{KAP for Primes} 
Let $R$ be an arbitrary ring. If the union of any chain of prime ideals in $R$ is again a prime ideal, then {\rm (KAP)} holds for the prime ideals of $R$.
\end{corollary}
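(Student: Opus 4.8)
The plan is to deduce the statement directly from Proposition~\ref{KAP} by choosing $\mathcal{F}$ to be the family of all prime ideals of $R$. Since Proposition~\ref{KAP} requires only that $\mathcal{F}$ be nonempty and closed under unions and intersections of chains of ideals, the entire task reduces to verifying these two closure properties for the prime ideals of $R$. Closure under unions of chains is precisely the standing hypothesis of the corollary, so that half requires no argument at all.

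For closure under intersections, the key observation I would isolate is that any chain of prime ideals is automatically a downward directed set under inclusion: given two primes $P,Q$ belonging to the chain, one is contained in the other, and the smaller of the two serves as a common lower bound lying in the chain. Consequently Proposition~\ref{IntersectionProperty} applies verbatim to any chain of prime ideals and shows that the intersection of such a chain is again prime. This establishes that the family of prime ideals is closed under intersections of chains.

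With both closure properties in place, Proposition~\ref{KAP} immediately yields that (KAP) holds for the prime ideals of $R$. (Should $R$ possess no prime ideals, the conclusion is vacuously true, so we may freely assume $\mathcal{F}$ is nonempty as Proposition~\ref{KAP} demands.) I expect no genuine obstacle in this argument; the only point deserving explicit mention is the elementary remark that a chain, regarded as a poset under inclusion, is downward directed, since it is exactly this fact that licenses the application of the intersection property to upgrade the hypothesis on unions into the full pair of closure conditions required by Proposition~\ref{KAP}.
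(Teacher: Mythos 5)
Your proposal is correct and matches the paper's argument exactly: the paper also deduces the corollary immediately from Proposition~\ref{KAP} applied to the family of prime ideals, using the hypothesis for unions of chains and Proposition~\ref{IntersectionProperty} (a chain being downward directed) for intersections. Nothing further is needed.
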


The sufficient condition in Corollary \ref{KAP for Primes} is not necessary, as the next example shows.

\begin{example}
\label{row-finite Union prime} Consider the following row-finite graph $E$.

\[\xymatrix{ 
{\bullet}^{u_{1}} \ar [r]  \ar[d] & {\bullet}^{u_{2}} \ar [r] \ar[d] & {\bullet}^{u_{3}} \ar [r]  \ar[d] & \ar@{.}[r] & \\
{\bullet}^{v_{1}} \ar@(ul,ur) \ar@(dr,dl) & {\bullet}^{v_{2}} \ar@(ul,ur) \ar@(dr,dl) \ar [l]  & {\bullet}^{v_{3}} \ar@(ul,ur) \ar@(dr,dl) \ar [l] & \ar [l]  \ar@{.}[r] &\\
{\bullet}^{w_{1}} \ar [r] \ar[u] & {\bullet}^{w_{2}} \ar [r] \ar[u]  & {\bullet}^{w_{3}} \ar [r] \ar[u] & \ar@{.}[r] &\\
}\]
\medskip

\noindent We claim that (KAP) holds for the prime ideals of $L:=L_{K}(E)$. To see this, first observe that, since $E$ satisfies Condition (K), all the ideals of $L$ are necessarily graded, by Proposition~\ref{All semiprime}. In addition, $E$ is row-finite, and hence the ideals of $L$ correspond to the hereditary saturated subsets of $E^0$, by the remarks in Section~\ref{LPA-subsection}. Setting $U=\{u_{1},u_{2},u_{3},\dots\}$, $V=\{v_{1},v_{2},v_{3},\dots\}$, and $W=\{w_{1},w_{2},w_{3},\dots\}$, we see that the proper hereditary saturated subsets of $E^0$ are $\emptyset$, $U\cup V$, $W\cup V$, $V$, and $H_{n}=$ $\{v_{1},\dots, v_{n}\}$, for each $n\geq1$. It is easy to verify that $V$ is the only hereditary saturated set for which $E^{0}\backslash V$ is not downward directed. Thus, by Theorem~\ref{primeclass}, the prime ideals of $L$ are $0$, $\langle U\cup V\rangle$, $\langle W\cup V\rangle$, and $P_{n}= \langle H_{n}\rangle$, for each $n\geq1$. It follows that $(P_{n},P_{n+1})$ is a Kaplansky pair for each $n\geq1$. Since $P_{n}, P_{n+1} \subseteq \langle U\cup V\rangle$ and $P_{n}, P_{n+1} \subseteq \langle W\cup V\rangle$, it then follows that $\Spec (L)$ satisfies (KAP). However, the union of the (well-ordered ascending) chain
\[P_{1}\subseteq P_{2}\subseteq P_{3}\subseteq \cdots\]
of prime ideals of $L$ is the ideal $\langle V\rangle$, which is not prime.
\end{example}

\begin{remark}\label{well-ord-remark}
In contrast to Example \ref{row-finite Union prime}, we note that if $E$ is row-finite and if an ideal $P$ of $L_{K}(E)$ is the union of a well-ordered strictly ascending chain
\[P_{1}\subsetneq\cdots \subsetneq P_{\lambda}\subsetneq
P_{\lambda+1}\subsetneq\cdots \qquad (\lambda<\kappa)\]
of prime ideals in $L_{K}(E),$ where the ordinal $\kappa$ has uncountable cofinality, then $P$ is a prime ideal. To justify this, by Theorems~\ref{non-graded-threorem} and~\ref{primeclass}, it suffices to show that $E^{0}\backslash H$ is downward directed, where $H = P\cap E^0$. This is equivalent to showing that $T(u)\cap T(v)\nsubseteq H$ for all $u,v\in E^{0}\backslash H$ (where, for any vertex $u$, $T(u)=\{w\in E^{0}:u\geq w\}$ is the \emph{tree} of $u$). Suppose, on the contrary, that $T(u)\cap T(v)\subseteq H$. Now the set $T(u)\cap T(v)$ is countable, as $E$ is row-finite, and since $\kappa$ has uncountable cofinality, there exists a $\lambda<\kappa$ such that $T(u)\cap T(v)\subseteq H_{\lambda}$, where $H_{\lambda} = P_{\lambda} \cap E^0$. This is a contradiction, since $P_{\lambda}$ is a prime ideal and so $u,v\in E^{0}\backslash H_{\lambda}$ implies that $T(u)\cap T(v)\nsubseteq H_{\lambda}$, again by Theorem~\ref{primeclass}. Hence $P$ must be a prime ideal.
\end{remark}

\begin{remark}
The claim in Remark~\ref{well-ord-remark} remains true if we replace ``row-finite" by ``row-countable", as $T(u)\cap T(v)$ would then still be at most countable, and the same argument applies.
\end{remark}

We next explore additional sufficient conditions for (KAP) to hold for prime ideals of $L_K(E)$. We begin with an interesting property of prime ideals.

\begin{proposition} \label{Property of primes} 
Let $E$ be a graph, $K$ a field, and $L:=L_{K}(E)$. If $P \subseteq A$ are ideals of $L$, where $P$ is prime, then either $P=A$ or $P\subseteq \gr(A)$.
\end{proposition}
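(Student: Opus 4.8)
The plan is to prove the statement in the sharper form: if $P \not\subseteq \gr(A)$, then $P = A$. Throughout write $H = A \cap E^0$ and $S = \{v \in B_H : v^H \in A\}$, so that $\gr(A) = I(H,S)$ by Theorem~\ref{non-graded-threorem}, and write $H_P = P \cap E^0$. The first thing I would record is that $\gr(A)$ is the \emph{largest} graded ideal contained in $A$. This holds because in $L/\gr(A) \cong L_K(E\backslash(H,S))$ the image of $A$ contains no vertex of the quotient graph (immediate from the definitions of $H$ and $S$ together with Theorem~\ref{non-graded-threorem}), whereas every nonzero graded ideal of a Leavitt path algebra meets the vertex set; hence any graded ideal inside $A$ maps to $0$, i.e.\ lies in $\gr(A)$. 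In particular $\gr(P) \subseteq \gr(A)$. Consequently, if $P$ is a graded prime (cases $(1)$--$(2)$ of Theorem~\ref{primeclass}), then $P = \gr(P) \subseteq \gr(A)$ and we are done. So I may assume $P$ is non-graded, i.e.\ case $(3)$ of Theorem~\ref{primeclass}: $P = \langle H_P \cup \{v^{H_P} : v \in B_{H_P}\} \cup \{f(c)\}\rangle$ with $c$ a WK cycle of source $u$, with $E^0\backslash H_P = \{v : v \geq u\}$, and $f$ irreducible; and I may assume $f(c) \notin \gr(A)$.

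The core of the argument is to upgrade the inclusion $P \subseteq A$ to an equality of graded parts. Since $f(c) \notin \gr(A) \supseteq \langle H\rangle$ and $H$ is hereditary, the base vertex $u$ cannot lie in $H$ (otherwise all of $c^0$, hence $f(c)$, would lie in $\langle H\rangle \subseteq \gr(A)$); and then heredity forces every $v \geq u$ to avoid $H$ as well, since $v \in H$ with $v \geq u$ would give $u \in H$. This yields $E^0\backslash H = \{v : v \geq u\} = E^0\backslash H_P$, that is, $H = H_P$. Now $B_H = B_{H_P}$ and $\gr(P) = I(H, B_H)$ (type $(3)$ primes carry all breaking vertices), so the inclusion $I(H, B_H) = \gr(P) \subseteq \gr(A) = I(H,S)$ together with the admissible-pair order, $S \subseteq B_H$, and $H \cap B_H = \emptyset$, forces $S = B_H$. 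Hence $\gr(A) = \gr(P)$.

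I would then pass to $\bar L = L/\gr(P) \cong L_K(\bar E)$, where $\bar E = E\backslash(H_P, B_{H_P})$ has $\bar E^0 = \{v : v \geq u\}$ downward directed. Applying Theorem~\ref{non-graded-threorem} to $P$ itself shows $c$ is a cycle without exits in $\bar E$, and the decomposition established in the proof of Theorem~\ref{characterization} presents $\bar A = A/\gr(A)$ inside a direct sum of rings $M_{\Lambda_i}(K[x,x^{-1}])$, one for each cycle without exits occurring in $A$. The key structural observation is that downward directedness of $\bar E^0$ forces a \emph{unique} cycle without exits: if $c_i, c_j$ were two such cycles, then $T(s(c_i)) = c_i^{0}$ and $T(s(c_j)) = c_j^{0}$ (no exit means one cannot leave the cycle), and any common lower bound of $s(c_i), s(c_j)$ would lie in $c_i^{0} \cap c_j^{0}$, forcing $c_i = c_j$. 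Thus $\bar A$ and $\bar P$ both live in the single component $\langle c\rangle \cong M_\Lambda(K[x,x^{-1}])$. Under the Morita correspondence with $K[x,x^{-1}]$ used in Theorem~\ref{characterization}, $\bar P$ corresponds to $\langle f(x)\rangle$ and $\bar A$ to $\langle f_A(x)\rangle$ for some $f_A$ with nonzero constant term; then $\bar P \subseteq \bar A$ gives $f_A \mid f$, and irreducibility of $f$ leaves only $\langle f_A\rangle = \langle f\rangle$ (the alternative that $f_A$ is a unit is excluded, as it would place a vertex into $\bar A$). Therefore $\bar A = \bar P$, and since $\gr(A) = \gr(P) \subseteq P \subseteq A$ this gives $A = P$.

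I expect the main obstacle to be the middle step, namely establishing $\gr(A) = \gr(P)$ from the single hypothesis $f(c) \notin \gr(A)$: pinning down $H = H_P$ via heredity and then matching the breaking-vertex data $S = B_H$ through the admissible-pair order. Once these equalities are in hand, the remainder reduces cleanly to the uniqueness of the exit-free cycle in the downward-directed quotient and a divisibility computation in $K[x,x^{-1}]$ driven by the irreducibility of $f$.
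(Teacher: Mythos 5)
Your proof is correct and follows essentially the same route as the paper's: reduce to a non-graded prime $P$ of type $(3)$, use $f(c)\notin\gr(A)$ together with heredity to force $H=H_P$ and $S=B_H$, invoke uniqueness of the exit-free cycle in the downward-directed quotient, and finish with irreducibility of $f$. The only cosmetic difference is the endgame, where you run the divisibility argument through the Morita correspondence with $K[x,x^{-1}]$ while the paper instead uses the minimal-degree property of the generator $f(c)$ of the non-graded part of $A$; both are valid, and your explicit justification that $\gr(A)$ is the largest graded ideal inside $A$ fills in a step the paper leaves implicit.
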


\begin{proof}
If $P$ is  graded, then clearly the given condition holds. Let us therefore assume that $P$ is a non-graded prime ideal, and suppose that there is an ideal $A$ such that $P\subseteq A$ and $P\nsubseteq \gr(A)$. In particular, $A$ is a non-graded ideal, and hence, by Theorem~\ref{non-graded-threorem}, $A=I(H',S')+\sum_{i\in X} \langle f_{i}(c_{i}) \rangle$, with $H'=A\cap E^{0}$ and $S'\subseteq B_{H'}$, where each $c_{i}$ is a cycle without exits in $E^{0}\backslash H'$, and each $f_{i}(x)\in K[x]$ is a polynomial of smallest degree such that $f_{i}(c_{i})\in A$. By Theorem~\ref{primeclass} and Remark~\ref{ideals in K[x,x^-1]}, we have $P=I(H,B_{H})+\langle p(c)\rangle$, where $H=P\cap E^{0}$, $c$ is a cycle without exits, based at a vertex $u$, in $E\backslash (H,B_{H})$, $v\geq u$ for all $v\in E^{0}\backslash H$, and $p(x) \in K[x]$ is an irreducible polynomial with nonzero constant term. Clearly, $c$ is the only cycle without exits in $E^{0}\backslash H$. As $P\nsubseteq \gr(A)=I(H', S')$, we see that $p(c)\notin I(H',S')$ and $c$ is also a cycle without exits in $E^{0}\backslash H'$. Since $v\geq u$ for all $v\in E^{0}\backslash H$ and hence also all $v\in E^{0}\backslash H'$, we conclude that $c_{i}=c$ for all $i \in X$. Moreover $H'=H$, because if there were a $v\in H'\backslash H$, then $v\geq u$ would imply that $u\in H'$, contradicting $p(c) \notin I(H',S')$. It follows also that $S'=B_{H}$. Thus $A$ is of the form $A = I(H,B_{H}) + \langle f(c)\rangle$, where $f(x)\in K[x]$ has nonzero constant term. Clearly, $\langle p(c)\rangle \subseteq\langle f(c)\rangle$ in $L/I(H,B_{H})$. Since  $f(x)$ was chosen to be  a polynomial of smallest degree in $K[x]$ such that $f(c)\in A$, and since $p(x)$ is irreducible in $K[x]$, it follows that $\langle p(c)\rangle =\langle f(c)\rangle$ in $L/I(H,B_{H})$. Therefore $A=I(H,B_{H})+\langle p(c)\rangle =P$, as desired.
\end{proof}

\begin{proposition} \label{LinOrderedPrimes} 
Let $E$ be a graph, $K$ a field, and $L:=L_{K}(E)$. If $\, \Spec(L)$ is totally ordered by set inclusion, then {\rm (KAP)} holds for prime ideals in $L$.
\end{proposition}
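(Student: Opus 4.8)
The plan is to reduce the statement to two facts already in hand: the structural dichotomy of Proposition~\ref{semiprimePrime}, which describes what a totally ordered prime spectrum forces on the entire ideal lattice, and the sufficient condition for (KAP) recorded in Corollary~\ref{KAP for Primes}. The first observation is that the hypothesis ``$\Spec(L)$ is totally ordered by set inclusion'' is nothing but the assertion that the prime ideals of $L$ form a chain under inclusion, which is exactly condition~(2) of Proposition~\ref{semiprimePrime}. So the whole problem is to exploit the equivalences already proved there.

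The first step I would take is to invoke Proposition~\ref{semiprimePrime} to pass from its condition~(2) to its condition~(3). This yields the strong conclusion that \emph{every} ideal of $L$ is both graded and prime. It is only the primeness of every ideal that I actually need for the argument, but it is worth noting that this is a genuinely global consequence of the chain hypothesis, not a local one.

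The second step is to check the hypothesis of Corollary~\ref{KAP for Primes}, namely that the union of any chain of prime ideals of $L$ is again prime. This is now immediate: the union of a chain of ideals is again an ideal of $L$, and by the previous step every ideal of $L$ is prime, so in particular the union of any chain of prime ideals is prime. Corollary~\ref{KAP for Primes} then applies verbatim and delivers (KAP) for the prime ideals of $L$, completing the proof.

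Realistically there is no serious obstacle here once Proposition~\ref{semiprimePrime} is available, since all of the work has been absorbed into establishing that a totally ordered prime spectrum collapses the ideal lattice so that every ideal is prime. The only point requiring a little care is the translation at the very start: one must recognize that ``$\Spec(L)$ totally ordered'' is precisely condition~(2) of Proposition~\ref{semiprimePrime}, after which the two cited results chain together without further computation. (One could instead argue more directly from Proposition~\ref{Property of primes}, but that route is unnecessary given the clean reduction above.)
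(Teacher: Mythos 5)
Your proof is correct and follows essentially the same route as the paper's: both translate the hypothesis into condition~(2) of Proposition~\ref{semiprimePrime} and then verify the hypothesis of Corollary~\ref{KAP for Primes}. The only (immaterial) difference is that you pass to condition~(3), so that the union of a chain of primes is an ideal and hence trivially prime, whereas the paper passes to condition~(1) and invokes Proposition~\ref{semiprimeLattice} to see that such a union is semiprime and hence prime.
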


\begin{proof}
If the prime ideals of $L$ form a chain, then every semiprime ideal of $L$ is prime, by Proposition~\ref{semiprimePrime}. Thus, by Proposition~\ref{semiprimeLattice}, $\Spec(L)$ is closed under taking unions of chains. Applying Corollary~\ref{KAP for Primes}, we conclude that (KAP) holds for prime ideals in $L$.
\end{proof}

Proposition~\ref{LinOrderedPrimes} and~\cite[Theorem 27]{AAMS} can be used to completely characterize totally ordered sets which can be realized as the prime spectra of Leavitt path algebras. To achieve this, let us recall some notation and a result from~\cite{AAMS}.

\begin{definition}
\label{glb} Let $\,(P,\leq)$ be a partially ordered set. For any $S\subseteq
P$ and $p\in P$, we write $p= \gb(S)$ in case $S$ has a greatest lower bound in $P$, and it is equal to $p$. Also let
\[\RT(P)=\{p\in P:p\neq \gb(S)\text{ for all }S\subseteq P\text{ downward directed }\]
\vspace{-.25in}
\[ \text{ without a least element}\}.\]
We view $\RT(P)$ as a partially ordered subset of $P$.
\end{definition}

\begin{notation}
\label{poset properties} We assign the indicated names in case the partially ordered set $\,(P,\leq)$ satisfies the following properties.

(GLB)\quad\ Every downward directed subset of $P$ has a greatest lower bound in
$P$.

(DC) \quad\ \ For every downward directed subset $S$ of $P$ and every $p\in\RT(P)$ satisfying 

\quad\quad\quad\quad \ $p\geq \gb(S)$, we have $p\geq s$ for some $s\in S$.

(DD) \quad\ \ For every downward directed subset $S$ of $P$ such that $\gb(S)\in P$, there exists 

\quad\quad\quad\quad \ a downward directed subset $T$ of $\RT(P)$ satisfying $\gb(S)=\gb(T)$.

(KAP) \quad For all $p,q\in P$ such that $p<q$, there exist $p', q' \in P$ such that $p\leq p'<q'\leq q$ 

\quad\quad\quad\quad \ and there is no $t\in P$ satisfying $p'<t<q'$.
\end{notation}

\begin{proposition}[Proposition 38 in \cite{AAMS}] \label{Prop 38 AAMS} 
Let $\,(P,\leq)$ be a partially ordered set satisfying {\rm (GLB)}, and suppose that for every downward directed $S\subseteq P$ and every $p\in P$ satisfying $p>\gb(S)$, we have $p\geq s$ for some $s\in S$. Then $P$ satisfies {\rm (KAP)} if and only if $P$ satisfies {\rm (DD)}.
\end{proposition}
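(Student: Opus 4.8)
The plan is to prove the two implications separately, first extracting two lemmas that hold under the two standing hypotheses, namely {\rm (GLB)} and the directedness hypothesis that $p>\gb(S)$ forces $p\ge s$ for some $s\in S$ (call this $(\dagger)$). The first is a \emph{descent lemma}: if $r\in\RT(P)$ and $r<q$, then the subposet $(r,q]:=\{x\in P: r<x\le q\}$ has a minimal element, and any such minimal element covers $r$. To prove it I would run Zorn's Lemma downward on $(r,q]$: a chain $C\subseteq(r,q]$ is downward directed, so by {\rm (GLB)} it has a greatest lower bound $g$ with $r\le g\le q$; the key point is that $g>r$, for if $g=r$ then $C$ would be a downward directed set with no least element whose glb is $r$, contradicting $r\in\RT(P)$. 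Hence $g\in(r,q]$ is a lower bound inside the subposet and Zorn applies, and minimality forces the covering property. The second is a \emph{covering-bottom lemma}: if $q'$ covers $p'$ then $p'\in\RT(P)$. Here I would argue by contradiction using $(\dagger)$: if $p'\notin\RT(P)$, write $p'=\gb(S_1)$ with $S_1$ downward directed and without least element; then $(\dagger)$ produces $s_1\in S_1$ with $p'<s_1\le q'$, and the absence of a least element in $S_1$ yields a further $s_1'\in S_1$ with $p'<s_1'<s_1\le q'$, contradicting that $q'$ covers $p'$.

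For {\rm (DD)}$\Rightarrow${\rm (KAP)}, given $p<q$ the first task is to locate an element $r\in\RT(P)$ with $p\le r<q$. If $p\in\RT(P)$ I take $r=p$. Otherwise {\rm (DD)} supplies a downward directed $T\subseteq\RT(P)$ with $\gb(T)=p$; since $p\notin\RT(P)$ this $T$ has no least element, so all its elements exceed $p$, and $(\dagger)$ applied to $T$ and $q>p=\gb(T)$ yields some element of $T$ that is $\le q$, which by descending once more inside $T$ can be taken strictly below $q$. With $r\in\RT(P)\cap[p,q)$ in hand, the descent lemma produces a $q'$ covering $r$ with $r<q'\le q$, so $(r,q')$ is a covering pair inside $[p,q]$, establishing {\rm (KAP)}.

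For {\rm (KAP)}$\Rightarrow${\rm (DD)}, let $S$ be downward directed with $m=\gb(S)\in P$. If $m\in\RT(P)$ then $T=\{m\}$ works. If $m\notin\RT(P)$, I fix a downward directed $S_0$ without least element and with $\gb(S_0)=m$, so every $s\in S_0$ satisfies $s>m$. For each such $s$, {\rm (KAP)} gives a covering pair $m\le p''<q''\le s$, and the covering-bottom lemma gives $p''\in\RT(P)$ with $m<p''\le s$; thus $\RT(P)$ meets $(m,s]$ for every $s\in S_0$. I then propose $T=\RT(P)\cap\{x\in P:x>m\}$. Using $(\dagger)$ together with these elements one checks that $T$ is downward directed (any two members dominate elements of $S_0$, a common lower bound of which in $S_0$ in turn dominates an $\RT(P)$ element of $T$) and that $\gb(T)=m$ (any lower bound of $T$ lies below the chosen $\RT(P)$ elements, hence below all of $S_0$, hence below $m$). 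This is exactly {\rm (DD)}.

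The step I expect to be the main obstacle is guaranteeing, in {\rm (DD)}$\Rightarrow${\rm (KAP)}, that the covering pair actually lands inside the prescribed interval $[p,q]$. A naive attempt to take a minimal element of $(p,q]$ directly can fail, since $(p,q]$ need not have one when $p\notin\RT(P)$: descending chains may accumulate at $p$. The resolution is to first push up to an element $r\in\RT(P)$ strictly below $q$, where membership in $\RT(P)$ is precisely what prevents such accumulation and makes the downward Zorn argument on $(r,q]$ succeed; locating that $r$ is where {\rm (DD)} and $(\dagger)$ do the real work.
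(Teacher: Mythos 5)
Your proof is correct. Note that the paper itself gives no argument for this statement --- it is quoted verbatim as Proposition 38 of \cite{AAMS} --- so there is no in-paper proof to compare against; but your two lemmas (that for $r\in\RT(P)$ the interval $(r,q]$ has a minimal element by a downward Zorn argument using {\rm (GLB)}, and that the bottom of any covering pair lies in $\RT(P)$ by the hypothesis $(\dagger)$) are exactly the right tools, and each implication, including the verification that $T=\RT(P)\cap\{x:x>m\}$ is downward directed with $\gb(T)=m$, checks out.
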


\begin{theorem} \label{Realization Thm} 
A totally ordered set $(P, \leq)$ can be realized as the prime spectrum of a Leavitt path algebra if and only if $P$ satisfies {\rm (GLB)} and {\rm (KAP)}.
\end{theorem}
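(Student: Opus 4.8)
The plan is to prove the two implications separately, and the one structural observation that drives everything is that in a totally ordered set a subset is downward directed exactly when it is nonempty: given $u,v$ in such a subset $S$, one of them---say $u$ with $u \le v$---already witnesses downward directedness. This collapses every directedness hypothesis in the realization machinery of~\cite{AAMS} into a statement about arbitrary nonempty subsets, each of which becomes a one-line total-ordering argument.

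For necessity, I would assume $P \cong \Spec(L)$ for some Leavitt path algebra $L$, so that $\Spec(L)$ is totally ordered under inclusion. Condition (KAP) for $P$ is then immediate from Proposition~\ref{LinOrderedPrimes}. For (GLB), I would take any nonempty (equivalently, downward directed) subset $S \subseteq \Spec(L)$; since a chain of prime ideals is downward directed under inclusion, Proposition~\ref{IntersectionProperty} gives that $\bigcap_{Q \in S} Q$ is prime, and this intersection is contained in every member of $S$ while containing every prime that is contained in all members of $S$, so it is the greatest lower bound of $S$ in $\Spec(L)$.

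For sufficiency, I would assume $P$ is totally ordered with (GLB) and (KAP), and verify the conditions (GLB), (DC), (DD) under which~\cite[Theorem 27]{AAMS} realizes a poset as a prime spectrum. Condition (GLB) is assumed. For the directedness hypothesis of Proposition~\ref{Prop 38 AAMS}, I would take a nonempty $S$ and $p > \gb(S)$: if no $s \in S$ satisfied $p \ge s$, totality would force $p < s$ for all $s$, making $p$ a lower bound of $S$ and contradicting $p > \gb(S)$. The same argument, together with the observation that a member of $\RT(P)$ equal to $\gb(S)$ forces $S$ to possess a least element (namely that greatest lower bound itself, which then lies in $S$), yields (DC). Finally, Proposition~\ref{Prop 38 AAMS}---applicable since (GLB) and the directedness hypothesis both hold---asserts that (KAP) is equivalent to (DD), so (DD) follows from the assumed (KAP). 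Then~\cite[Theorem 27]{AAMS} realizes $P$ as the prime spectrum of a Leavitt path algebra.

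The main difficulty will be bookkeeping rather than mathematics: I must invoke the external results~\cite[Theorem 27]{AAMS} and Proposition~\ref{Prop 38 AAMS} with exactly the right quantifiers, in particular keeping straight the distinction between the hypothesis $p \ge \gb(S)$ with $p \in \RT(P)$ appearing in (DC) and the hypothesis $p > \gb(S)$ with $p \in P$ appearing in Proposition~\ref{Prop 38 AAMS}, and confirming that the three conditions (GLB), (DC), (DD) are precisely what~\cite[Theorem 27]{AAMS} requires. Once the reduction ``downward directed $=$ nonempty'' is in hand, every verification is a routine total-ordering argument, so the entire delicacy lies in matching statements to hypotheses.
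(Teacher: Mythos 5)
Your proposal is correct and follows essentially the same route as the paper: necessity via Proposition~\ref{LinOrderedPrimes} for (KAP) and Proposition~\ref{IntersectionProperty} for (GLB), and sufficiency by checking that total ordering gives (DC) and the hypothesis of Proposition~\ref{Prop 38 AAMS}, so that (KAP) yields (DD) and \cite[Theorem 27]{AAMS} applies. Your write-up merely makes explicit the routine verifications (downward directed $=$ nonempty, and the least-element argument for $p=\gb(S)$ with $p\in\RT(P)$) that the paper leaves to the reader.
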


\begin{proof}
\cite[Theorem 27]{AAMS} states, among other things, that a partially ordered set that satisfies (GLB), (DC), and (DD) can be realized as the prime spectrum of a Leavitt path algebra. Now, since $P$ is totally ordered, it necessarily satisfies (DC) and the condition in Proposition~\ref{Prop 38 AAMS}. Thus, if $P$ satisfies (GLB) and (KAP), then it also satisfies (DC) and (DD), and hence it can be realized as the prime spectrum of a Leavitt path algebra, by~\cite[Theorem 27]{AAMS}.

Conversely, if $P$ is order-isomorphic to the prime spectrum of a Leavitt path
algebra, then it satisfies (KAP), by Proposition~\ref{LinOrderedPrimes}.
Moreover, the prime spectrum of any ring  satisfies (GLB) by Proposition \ref{IntersectionProperty}, from which the result follows.
\end{proof}

\begin{remark} Given a field $K$ and a graph $E$, it can be shown that the ideal extension $\mathcal{E}(K, L)$ of $K$ by $L:=L_K(E)$ is a unital ring, such that there is a one-to-one inclusion-preserving correspondence between the prime ideals of $L$ and the prime ideals of $\mathcal{E}(K, L)$, except for an extra maximal ideal in the latter. Thus Theorem~\ref{Realization Thm} implies that any totally ordered set with a maximal element, that satisfies (GLB) and (KAP), can be realized as the prime spectrum of a unital ring.
\end{remark}

\section{Union-Prime Ideals of Leavitt Path Algebras}\label{UnionPrimesection}

Following~\cite{GRV}, we call a non-prime ideal in a ring $R$, which is the union of an ascending chain of prime ideals of $R$, a \emph{union-prime ideal}. In this section we study the union-prime ideals in Leavitt path algebras, since the absence of such ideals helps (KAP) hold for prime ideals. To measure the extent  to which union-prime ideals are prevalent in Leavitt path algebras, we investigate the $P^{\Uparrow}$-index, introduced in~\cite{GRV}, for such rings, and show that $P^{\Uparrow}(L_{K} (E))\leq|E^{0}|$ for any graph $E$. Examples are constructed of Leavitt path algebras having $P^{\Uparrow}$-index $\kappa$, for any prescribed infinite cardinal
$\kappa$.

\begin{proposition} \label{union-prime graded} 
Let $E$ be a graph, $K$ a field, and $L:=L_{K}(E)$. Then the union of any strictly ascending chain of prime ideals of $L$ is a graded ideal, and the intersection of any strictly descending chain of prime ideals of $L$ is a graded ideal. 

In particular, every union-prime ideal in $L$ is graded, and hence semiprime.
\end{proposition}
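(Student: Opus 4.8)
The plan is to derive both statements from Proposition~\ref{Property of primes}, which says that whenever a prime ideal $P$ lies inside an ideal $A$, either $P = A$ or $P \subseteq \gr(A)$. The only genuine work is to arrange the relevant containments to be \emph{proper}, so that Proposition~\ref{Property of primes} is invoked in its nontrivial form; the rest is assembled using the elementary fact that an arbitrary intersection of graded ideals is again graded.

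For the ascending case, let $\{P_\lambda\}$ be a strictly ascending chain of prime ideals with union $A$. First I would observe that each $P_\lambda$ is properly contained in $A$: this holds because the chain is strictly ascending (and in the union-prime setting it is forced by the fact that $A$ is non-prime while each $P_\lambda$ is prime, so no term can equal $A$). Applying Proposition~\ref{Property of primes} to $P_\lambda \subsetneq A$ then gives $P_\lambda \subseteq \gr(A)$ for every $\lambda$. Taking the union yields $A = \bigcup_\lambda P_\lambda \subseteq \gr(A) \subseteq A$, whence $A = \gr(A)$ is graded.

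For the descending case, let $\{P_\lambda\}$ be a strictly descending chain with intersection $B$. Since a chain is downward directed, Proposition~\ref{IntersectionProperty} shows $B$ is prime, and strictness gives $B \subsetneq P_\lambda$ for each $\lambda$. Applying Proposition~\ref{Property of primes} now to $B \subsetneq P_\lambda$ yields $B \subseteq \gr(P_\lambda)$ for all $\lambda$, hence $B \subseteq \bigcap_\lambda \gr(P_\lambda)$. The reverse inclusion is immediate from $\gr(P_\lambda) \subseteq P_\lambda$, which gives $\bigcap_\lambda \gr(P_\lambda) \subseteq \bigcap_\lambda P_\lambda = B$. Thus $B = \bigcap_\lambda \gr(P_\lambda)$ is an intersection of graded ideals, and is therefore graded.

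The final clause is then immediate: a union-prime ideal is by definition a non-prime ideal arising as the union of an ascending chain of primes, so the ascending case applies and shows it is graded; and every graded ideal $I(H,S)$ is semiprime since $L/I(H,S) \cong L_{K}(E\setminus(H,S))$ has zero prime radical. I expect the main obstacle to be essentially bookkeeping rather than algebra: the crux is recognizing that strictness of the chain (not mere monotonicity) is exactly what makes the containments proper, so that the nontrivial alternative of Proposition~\ref{Property of primes} is the one that applies. All of the substantive content is carried by Proposition~\ref{Property of primes}, Proposition~\ref{IntersectionProperty}, and the closure of graded ideals under intersections.
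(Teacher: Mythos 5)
Your argument is correct and follows essentially the same route as the paper: both results are immediate consequences of Proposition~\ref{Property of primes} together with the closure of graded ideals under unions and intersections of chains of graded ideals. The only (harmless) difference is that you apply that proposition to the pairs $P_\lambda \subsetneq A$ and $B \subsetneq P_\lambda$ (using Proposition~\ref{IntersectionProperty} to make $B$ prime), whereas the paper applies it to pairs $P_i \subsetneq P_{i'}$ inside the chain itself; both variants tacitly require the chain to have no greatest (resp.\ least) element, an assumption the paper's proof also makes implicitly and which is automatic in the union-prime setting.
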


\begin{proof}
Let $\{P_i : i \in X\}$ be a strictly  ascending chain of prime ideals of $L$, where $(X, \leq)$ is a totally ordered set, and $P_i \subseteq P_j$ whenever $i \leq j$ ($i,j \in X$). Since this chain of prime ideals is assumed to be strictly ascending, for each $i \in X$ we can find some $i' \in X$ such that $P_i \subsetneq P_{i'}$. Then $P_i \subseteq \gr(P_{i'})$, by Proposition~\ref{Property of primes}, and hence $\bigcup_{i \in X} P_i = \bigcup_{i' \in X} \gr(P_{i'})$ is a graded ideal.

Proving that the intersection of a strictly descending chain of prime ideals is graded can be accomplished in an entirely analogous manner.

The final claim follows from the aforementioned fact that every graded ideal of a Leavitt path algebra is semiprime.
\end{proof}

\begin{definition}
Let $\tilde{P}=\{P_i : i \in X\}$ be a chain of prime ideals in a ring $R$. The cardinality of the set of non-prime unions of subchains of $\tilde{P}$ is called the \emph{index} of $\tilde{P}$, and is denoted by $\Ind (\tilde{P})$. The $P^{\Uparrow}$-\emph{index} of $R$, denoted by $P^{\Uparrow}(R)$, is the supremum of the set of cardinals $\Ind (\tilde{P})$, taken over all chains of primes $\tilde{P}$ in $R$.
\end{definition}

We require the following lemma in order to give a bound for the $P^{\Uparrow}$-index of an arbitrary Leavitt path algebra.

\begin{lemma} \label{Grade inbetween} 
Let $E$ be a graph, $K$ a field, and $L:=L_{K}(E)$. Also let $\Lambda$ be a chain of prime ideals of $L$, and let $\, \{P_{i}:i\in X\}$ be the set of the union-prime ideals that are unions of subchains of $\Lambda$, where $\,(X, \leq)$ is a totally ordered set such that $P_i \subseteq P_j$ whenever $i \leq j$ $(i,j \in X)$. 

Then for every $i\in X$ that is not a greatest element, there is a graded prime ideal $Q_{i}$ of $L$ such that $P_{i}\subsetneq Q_{i}\subsetneq P_{j}$ for all $j>i$ $(j \in X)$.
\end{lemma}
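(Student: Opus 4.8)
The plan is to produce a single prime ideal $R^{*}$ of $L$ sitting strictly between $P_{i}$ and every $P_{j}$ with $j>i$, and then to replace it by its graded part. Since $P_{i}$ is a union-prime, it is graded but not prime, by Proposition~\ref{union-prime graded}. Because $i$ is not a greatest element of $X$, there is some $j>i$, and since each $P_{j}$ is by definition the union of a subchain of $\Lambda$ having no greatest element, a useful first observation is that every member of $\Lambda$ is comparable to $P_{i}$: writing $P_{i}=\bigcup C_{i}$ for a subchain $C_{i}\subseteq\Lambda$, any $R\in\Lambda$ either lies below some element of $C_{i}$ (so $R\subseteq P_{i}$) or dominates all of $C_{i}$ (so $R\supseteq P_{i}$). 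Consequently the set $\Lambda_{>P_{i}}:=\{R\in\Lambda:R\supsetneq P_{i}\}$ is nonempty: from $P_{i}\subsetneq P_{j}=\bigcup C_{j}$ we obtain an element of $C_{j}$ not contained in $P_{i}$, which by the dichotomy therefore strictly contains $P_{i}$.

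Next I would set $R^{*}=\bigcap\Lambda_{>P_{i}}$. As $\Lambda_{>P_{i}}$ is a nonempty chain, it is downward directed, so $R^{*}$ is prime by Proposition~\ref{IntersectionProperty}; it contains $P_{i}$ (each of its members does), and this containment is strict because $R^{*}$ is prime while $P_{i}$ is not. The essential point---and the step I expect to be the main obstacle, since it is where ``for all $j$'' must be met by a single object---is that $R^{*}\subsetneq P_{j}$ for every $j>i$. For a fixed such $j$, write $P_{j}=\bigcup C_{j}$ with $C_{j}$ having no greatest element, and pick, exactly as in the previous paragraph, some $R_{0}\in C_{j}$ with $R_{0}\supsetneq P_{i}$, so that $R_{0}\in\Lambda_{>P_{i}}$; since $C_{j}$ has no maximum, $R_{0}\subsetneq P_{j}$. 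Then $R^{*}\subseteq R_{0}\subsetneq P_{j}$, giving $R^{*}\subsetneq P_{j}$. This uniform bound works precisely because $R^{*}$ is the intersection over the whole of $\Lambda_{>P_{i}}$, hence is contained in every separating prime $R_{0}$ regardless of $j$.

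Finally I would put $Q_{i}=\gr(R^{*})$. This ideal is graded by construction, and it is prime because the graded part of any prime ideal is prime: by Theorem~\ref{primeclass} a prime ideal is either already graded, or of the form $I(H,B_{H})+\langle f(c)\rangle$ with $E^{0}\setminus H=\{v:v\geq u\}$ downward directed, in which case its graded part $I(H,B_{H})$ is prime by case~(1) of that theorem. Since $P_{i}$ is a graded ideal contained in $R^{*}$, and $\gr(R^{*})$ is the largest graded ideal contained in $R^{*}$ (a standard property of the graded part from Theorem~\ref{non-graded-threorem}), we get $P_{i}\subseteq Q_{i}$, and this is strict because $Q_{i}$ is prime and $P_{i}$ is not. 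Combining with $Q_{i}\subseteq R^{*}\subsetneq P_{j}$ yields $P_{i}\subsetneq Q_{i}\subsetneq P_{j}$ for all $j>i$, as required. The only routine-but-delicate ingredient is that $\gr(R^{*})$ is the largest graded ideal inside $R^{*}$ (which needs a little care with breaking vertices); alternatively one can bypass it by taking $Q_{i}=\bigcap_{R\in\Lambda_{>P_{i}}}\gr(R)$, which is graded, is prime by Proposition~\ref{IntersectionProperty} once one checks that $\gr$ is monotone on primes via Proposition~\ref{Property of primes}, and contains $P_{i}$ because each $R'\in C_{i}$ satisfies $R'\subseteq\gr(R)$ for every $R\in\Lambda_{>P_{i}}$, again by Proposition~\ref{Property of primes}.
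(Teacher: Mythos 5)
Your argument is correct and is essentially the paper's own proof: your $R^{*}=\bigcap\Lambda_{>P_{i}}$ is literally the paper's $Q_i'=\bigcap\{I\in\Lambda:P_i\subseteq I\}$ (no prime of $\Lambda$ can equal the non-prime $P_i$), and both proofs then take $Q_i=\gr$ of this prime and verify the strict inclusions the same way. The only variation is cosmetic: the paper obtains $P_i\subseteq Q_i$ by applying Proposition~\ref{Property of primes} to each $I\in\Lambda$ with $I\subseteq P_i$, which is exactly the alternative route you sketch at the end, whereas your primary route (via $P_i$ being graded and $\gr(R^{*})$ being the largest graded ideal inside $R^{*}$) is also fine.
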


\begin{proof}
Given $i\in X$ that is not a greatest element, there exists $j \in X$ such that $j > i$, and hence there is a prime ideal $I \in \Lambda$ such that $P_{i}\subseteq I\subseteq P_{j}$, since otherwise we would have $P_{i}=P_{j}$. Thus $Q_i' = \bigcap \{I \in \Lambda : P_i \subseteq I\}$, being the intersection of a nonempty set of prime ideals, is a prime ideal (Proposition \ref{IntersectionProperty}). Then $Q_i = \gr(Q_i')$ is also a prime ideal, by Theorem~\ref{primeclass}, and it is graded, by definition. Clearly $Q_i \subseteq Q_i' \subseteq P_j$ for all $j > i$ ($i \in X$). Moreover, for all $I \in \Lambda$ such that $I \subseteq P_i$, we have $I \subsetneq Q_i'$, and hence $I \subseteq Q_i$, by Proposition~\ref{Property of primes}. It follows that $P_i \subseteq Q_i$. Finally, since for all $j > i$, the ideals $P_i$ and $P_j$ are not prime, we conclude that $P_{i}\subsetneq Q_{i}\subsetneq P_{j}$, as desired.
\end{proof}

\begin{proposition}
\label{P-index} Let $E$ be a graph, $K$ a field, and $L:=L_{K}(E)$. Then
$P^{\Uparrow}(L)\leq |E^{0}|$.
\end{proposition}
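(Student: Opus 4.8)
The plan is to fix an arbitrary chain $\Lambda$ of prime ideals of $L$ and to bound by $|E^{0}|$ the number of union-prime ideals that arise as unions of subchains of $\Lambda$; taking the supremum over all such $\Lambda$ then yields $P^{\Uparrow}(L)\leq|E^{0}|$. Let $\{P_{i}:i\in X\}$ be this set of union-prime ideals. Each $P_{i}$ equals $\bigcup\{I\in\Lambda:I\subseteq P_{i}\}$, and the sets $\{I\in\Lambda:I\subseteq P_{i}\}$ are initial segments of the chain $\Lambda$, hence totally ordered by inclusion; therefore the $P_{i}$ themselves form a chain, which I index so that $P_{i}\subseteq P_{j}$ exactly when $i\leq j$. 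By Proposition~\ref{union-prime graded} each $P_{i}$ is graded, say $P_{i}=I(H_{i},S_{i})$ with $H_{i}=P_{i}\cap E^{0}$. (If $E^{0}$ is finite there are only finitely many admissible pairs, so no strictly ascending chain of primes can have a non-prime union, whence $X=\emptyset$ and the bound is trivial; thus I may assume $E^{0}$ infinite, where $|E^{0}|+1=|E^{0}|$.) It then suffices to inject $X$, minus its greatest element if one exists, into $E^{0}$.

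For each non-greatest $i$, Lemma~\ref{Grade inbetween} supplies a graded prime $Q_{i}=I(H_{i}^{Q},S_{i}^{Q})$ with $P_{i}\subsetneq Q_{i}\subsetneq P_{j}$ for all $j>i$. A first, clean ingredient is that distinct union-primes have distinct vertex sets: if $P_{i}\subsetneq P_{i'}$ had $H_{i}=H_{i'}=:H$, then the sandwiched $Q_{i}$ would also satisfy $Q_{i}\cap E^{0}=H$, and by Theorem~\ref{primeclass} a graded prime with vertex set $H$ has breaking-vertex set $B_{H}$ or $B_{H}\setminus\{u\}$; the chain $S_{i}\subsetneq S_{i}^{Q}\subsetneq S_{i'}\subseteq B_{H}$ then forces $S_{i}^{Q}=B_{H}\setminus\{u\}$ with $E^{0}\setminus H=\{v:v\geq u\}$ and $S_{i'}=B_{H}$, whence $P_{i'}=I(H,B_{H})$ would be \emph{prime} by Theorem~\ref{primeclass}(1) (as $\{v:v\geq u\}$ is downward directed), contradicting that $P_{i'}$ is union-prime. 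Thus the $H_{i}$ strictly increase. Since $Q_{i}\subseteq P_{j}$ gives $H_{i}^{Q}\subseteq H_{j}$, one has $H_{i}\subseteq H_{i}^{Q}\subseteq\bigcap_{j>i}H_{j}$; whenever $\bigcap_{j>i}H_{j}\supsetneq H_{i}$ I pick a vertex $v_{i}$ in this difference. Such a $v_{i}$ lies in $H_{j}$ for every $j>i$, whereas $v_{j}\notin H_{j}$, so $i\mapsto v_{i}$ is injective on the set of such $i$. Note this covers every $i$ possessing an immediate successor $j_{0}$, since then $\bigcap_{j>i}H_{j}=H_{j_{0}}\supsetneq H_{i}$.

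The main obstacle is the remaining case, in which $i$ is a limit from above (has no immediate successor) and $\bigcap_{j>i}H_{j}=H_{i}$, forcing also $H_{i}^{Q}=H_{i}$: here the sandwiched prime $Q_{i}$ enlarges $P_{i}$ only through breaking-vertex generators, and no vertex is absorbed immediately above $P_{i}$. The plan is to take the witness to be a broken breaking vertex: choose $w\in S_{i}^{Q}\setminus S_{i}\subseteq B_{H_{i}}$, so that $w^{H_{i}}\in Q_{i}\setminus P_{i}$, and record the vertex $w\in E^{0}$. For $j>i$, the inclusion $Q_{i}\subseteq P_{j}$ gives $w^{H_{i}}\in P_{j}$, and the bookkeeping identity $w^{H_{j}}-w^{H_{i}}=\sum_{s(e)=w,\,r(e)\in H_{j}\setminus H_{i}}ee^{*}\in P_{j}$ (each summand has range in $H_{j}\subseteq P_{j}$, and the sum is finite since $w\in B_{H_{i}}$) shows $w^{H_{j}}\in P_{j}$, thereby pinning down the location of $w$ inside the data of the larger ideal $P_{j}$.

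The delicate point — and the technical heart of the argument — is to guarantee that these breaking-vertex witnesses remain distinct from one another and from the vertex witnesses, in particular ruling out that a breaking vertex $w$ used at a limit stage $i$ reappears as an honest vertex of $Q_{j}$ (equivalently, that $w\in H_{j}^{Q}\setminus H_{j}$) at some later stage $j$. Purely ideal-theoretic separation fails in this mixed configuration, so I expect to need a genuine graph-theoretic analysis, carried out after passing to the quotient graphs $E\setminus(H_{j},S_{j})$, of how the generator $w^{H_{i}}$ sits inside the directed union $P_{j}=\bigcup_{l}I_{j,l}$ of primes with vertex sets $H_{j,l}\subsetneq H_{j}$; this is where the argument is hardest and where the strict-growth and limit structure of union-prime vertex sets must be combined. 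Once injectivity is secured one obtains $|X|\leq|E^{0}|$, and taking the supremum over all chains $\Lambda$ gives $P^{\Uparrow}(L)\leq|E^{0}|$.
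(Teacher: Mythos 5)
Your setup coincides with the paper's: reduce to injecting $X'$ (which is $X$ minus a possible greatest element) into $E^0$, and use Lemma~\ref{Grade inbetween} to interpose a graded prime $Q_i$ between $P_i$ and all later $P_j$. But your proof is not complete, and you say so yourself: in the case where $i$ has no immediate successor and $\bigcap_{j>i}H_j=H_i$, you resort to a breaking-vertex witness $w\in S_i^Q\setminus S_i$ and then concede that you cannot rule out collisions between these witnesses and the vertex witnesses (e.g.\ a breaking vertex used at stage $i$ reappearing in $H_j^Q\setminus H_j$ later), deferring to an unspecified ``graph-theoretic analysis.'' That unfinished step is the entire content of the injectivity claim, so as written the argument has a genuine gap.

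The gap is created by your decision to track $H_i$ and $S_i$ separately; the paper dissolves it by tracking the single set $H\cup S$. The partial order on admissible pairs recalled in Section~\ref{LPA-subsection} is $(H_1,S_1)\leq(H_2,S_2)$ iff $H_1\subseteq H_2$ and $S_1\subseteq H_2\cup S_2$, and the correspondence $(H,S)\mapsto I(H,S)$ is one-to-one and order-preserving. Hence $I(H_1,S_1)\subseteq I(H_2,S_2)$ forces $H_1\cup S_1\subseteq H_2\cup S_2$, with strict inclusion when the ideals are distinct. So one simply chooses $u_i\in\bigl(H_i'\cup S_i'\bigr)\setminus\bigl(H_i\cup S_i\bigr)$, where $Q_i=I(H_i',S_i')$; for $j>i$ the inclusion $Q_i\subseteq P_j$ gives $u_i\in H_i'\cup S_i'\subseteq H_j\cup S_j$, while $u_j\notin H_j\cup S_j$ by construction, so $u_i\neq u_j$. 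This uniform choice makes your case split (vertex witness versus breaking-vertex witness), your lemma that the $H_i$ strictly increase, and the problematic ``mixed configuration'' all unnecessary. Your finiteness reduction and the observation that the $P_i$ form a chain are fine, and your argument that distinct union-primes in the chain have distinct $H_i$ is correct but is not needed once the witnesses are drawn from $H\cup S$.
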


\begin{proof}
If $E^{0}$ is finite, then by Theorem~\ref{primeclass}, there can be no infinite chains of prime ideals in $L$, and hence there can be no union-prime ideals in $L$, giving $P^{\Uparrow}(L) = 0 \leq |E^{0}|$. Thus we may assume that $E^{0}$ is infinite. 

Let $\Lambda$ be a chain of prime ideals of $L$, and let $\{P_{i}:i\in X\}$ be the set of the union-prime ideals that are unions of subchains of $\Lambda$, where $(X, \leq)$ is a totally ordered set such that $P_i \subseteq P_j$ whenever $i \leq j$ $(i,j \in X)$. Also let
\[X'=\left\{
\begin{array}{ll}
X & \text{if } X \text{ has no greatest element}\\
X\setminus \{x\} & \mbox{if } x \in X \text{ is a greatest element}  
\end{array}\right..\]
Our goal is to define an injective map $f : X' \rightarrow E^{0}$. 

Given $i\in X'$, by Lemma~\ref{Grade inbetween}, there is a graded prime ideal $Q_{i}$ of $L$ such that $P_{i}\subsetneq Q_{i}\subsetneq P_{j}$ for all $j>i$ ($j \in X$). Also, by Proposition~\ref{union-prime graded}, each $P_l$ ($l \in X$) is a graded ideal. Hence, by the remarks in Section~\ref{LPA-subsection}, we can write $P_{i}=I(H_{i},S_{i})$ and $Q_{i}=I(H_{i}',S_{i}')$, for some hereditary saturated  $H_{i},H_{i}' \subseteq E^0$ and $S_{i}\subseteq B_{H_{i}}, S_{i}'\subseteq B_{H_{i}'}$. Since $P_{i}\subsetneq Q_{i}$, we have $H_{i} \cup S_i \subsetneq H_{i}' \cup S_i'$, by the order-preserving correspondence between admissible pairs and graded ideals discussed in Section~\ref{LPA-subsection}. Thus, we can define $f : X' \rightarrow E^{0}$ by choosing arbitrarily for each $i \in X'$ some $u_i \in (H_{i}' \cup S_i')\backslash (H_{i} \cup S_i)$ and setting $f(i)= u_i$.

To show that $f$ is injective, let $P_{i}=I(H_{i},S_{i})$ and $Q_{i}=I(H_{i}',S_{i}')$ be as above, let $j>i$ ($j \in X$), and set $P_j=I(H_j,S_j)$, where $H_j \subseteq E^0$ is hereditary saturated and $S_j\subseteq B_{H_j}$. By the aforementioned correspondence between admissible pairs and graded ideals, $H_{i}' \cup S_i' \subseteq H_j \cup S_j$, from which it follows that $u_j \neq u_i$, showing that $f : X' \rightarrow E^{0}$ is injective. Therefore
\[|X| \leq |X'| + 1 \leq |E^0| +1 = |E^0|, \]  
which implies that $P^{\Uparrow}(L)\leq |E^{0}|$.
\end{proof}

We conclude this note by constructing, for each cardinal $\kappa$, a directed
graph $E$ such that $P^{\Uparrow}(L_{K}(E))=\kappa$. To facilitate the construction, we require the following notation from \cite{AAMS}.

\begin{definition} \label{mainconstr}
Given a partially ordered set $(P, \leq)$ we define a graph $E_P$ by letting
\[E^0_P = \{v_p : p \in P\} \ \ \ \ \mbox{ and }  \ \ \ \ \ E^1_P = \{e_{p,q}^i : i \in \N,  \text{ and } p,q \in P \text{ satisfy } p>q\},\] where $s(e_{p,q}^i) = v_p$ and $r(e_{p,q}^i) = v_q$ for all $i \in \N$.
\end{definition}

\begin{example}
For every ordinal $\gamma$ define a partially ordered set $(P_\gamma, \leq_\gamma)$ by \[P_\gamma = \{p_\lambda : \lambda< \gamma\} \cup \{r_\lambda : \lambda < \gamma \text{ is an infinite limit ordinal}\},\] where for all ordinals $\lambda < \mu <\gamma$ we let $p_\lambda <_\gamma p_\mu$, $p_\lambda <_\gamma r_\mu$, $r_\lambda <_\gamma p_\mu$, and $r_\lambda <_\gamma r_\mu$, where $\mu$ is assumed to be an infinite limit ordinal wherever appropriate. For instance,  $E_{P_{\omega+2}}$ can be visualized as follows.

\[\xymatrix{ 
& & & & {\bullet}^{p_\omega} \ar@/^-.5pc/[lld]_{\infty} \ar@/^-1pc/[llld]_{\infty} \ar@/^-1.5pc/[lllld]_{\infty} &\\
{\bullet}^{p_0} &  {\bullet}^{p_1} \ar[l]_\infty &  {\bullet}^{p_2} \ar[l]_{\infty} & \ar@{.}[l] & &  {\bullet}^{p_{\omega +1}} \ar[dl]^{\infty} \ar[ul]_{\infty}  \ar@/^-1.6pc/[lll]_{\infty} \ar@/^1pc/[llll]^{\infty} \ar@/^-1.6pc/[lllll]_{\infty}\\
& & & & {\bullet}^{r_\omega}  \ar@/^.5pc/[llu]^{\infty} \ar@/^1pc/[lllu]^{\infty} \ar@/^1.5pc/[llllu]^{\infty} &\\
}\]
\medskip

\noindent (The symbol $\infty$ appearing adjacent to an edge indicates that there are countably infinitely many edges from one vertex to another.) We note that given two ordinals $\lambda < \gamma$, we have $P_\lambda \subseteq P_\gamma$, and $\leq_\lambda$ is the restriction of $\leq_\gamma$ to $P_\lambda$. Thus we may view $E_{P_\lambda}$ as a subgraph of $E_{P_\gamma}$ whenever $\lambda \leq \gamma$. Finally, for each ordinal $\gamma$ let $L_\gamma = L_K(E_{P_\gamma})$, for an arbitrary field $K$.

Let $\gamma$ be an ordinal. Given an ordinal $\lambda < \gamma$, we note that $E_{P_\gamma}^{0}\backslash E_{P_\lambda}^0$ is downward directed if and only if $\lambda$ is not an infinite limit ordinal, since in the case where $\lambda$ is an infinite limit ordinal $E_{P_\gamma}^{0}\backslash E_{P_\lambda}^0$ contains the two sinks $v_{p_\lambda}$ and $v_{r_\lambda}$. Noting that $E_{P_\lambda}^0$ is necessarily hereditary and saturated, we conclude, by Theorem~\ref{primeclass}, that $P_\lambda = \langle E_{P_\lambda}^0 \rangle \subseteq L_\gamma$ is a prime ideal if and only if $\lambda$ is not an infinite limit ordinal. Moreover, if $\lambda$ is an infinite limit ordinal, then $P_\lambda= \bigcup_{\alpha<\lambda} P_{\alpha}$, and hence $P_\lambda$ is a union-prime ideal. 

Now let $\kappa$ be any infinite cardinal.   Let $\gamma$  be the smallest ordinal such that the set of all infinite limit ordinals less than $\gamma$ has cardinality $\kappa$ (such exists by a straightforward set-theoretic argument).   Then $P^{\Uparrow}(L_\gamma)=\kappa$.
\end{example}

\bibliographystyle{amsalpha}

\end{document}